\documentclass[11pt]{amsart}
\usepackage{amsthm}
\usepackage{amssymb}
\usepackage{MnSymbol}
\usepackage{graphicx}
\usepackage{subfigure}
\usepackage{geometry} 
\usepackage{psfrag}
\usepackage[all]{xy}
\geometry{a4paper} 



\title{The quivers of the hereditary algebras of type $\widetilde{A}_n$} 
\author{K. Baur, T. Br\"ustle}

\begin{document}

\newtheorem{lm}{Lemma}[section]
\newtheorem*{prop}{Proposition}
\newtheorem{satz}[lm]{Satz}

\newtheorem*{corollary}{Corollary}
\newtheorem{cor}[lm]{Korollar}
\newtheorem{theorem}[lm]{Theorem}
\newtheorem{thm}{Theorem}

\theoremstyle{definition}
\newtheorem{definition}[lm]{Definition}
\newtheorem*{defn}{Definition}
\newtheorem*{ue}{\"Ubung}
\newtheorem{bsp}{Beispiel}
\newtheorem*{ex}{Example}
\newtheorem*{exas}{Beispiele}
\newtheorem*{eigen}{Remark}
\newtheorem*{rem}{Remark}
\newtheorem{remark}[lm]{Remark}

\theoremstyle{remark}


\newcommand{\perm}{\operatorname{Perm}\nolimits}
\newcommand{\ZZ}{\operatorname{\mathbb{Z}}\nolimits}
\newcommand{\NN}{\operatorname{\mathbb{N}}\nolimits}
\newcommand{\PP}{\operatorname{\mathbb{P}}\nolimits}
\newcommand{\ad}{\operatorname{ad}\nolimits}
\newcommand{\im}{\operatorname{im}\nolimits}
\newcommand{\Hom}{\operatorname{Hom}\nolimits}
\newcommand{\id}{\operatorname{id}\nolimits}

\newcommand{\rep}{\operatorname{rep}\nolimits}
\newcommand{\Char}{\operatorname{char}\nolimits}

\begin{abstract}
This is a translation of the diploma thesis of Thomas Br\"ustle,\\
{\em Darstellungsk\"ocher der erblichen Algebren vom Typ $\widetilde{A}_n$}, 
written in German, at the University of Zurich, August 1990. \\ 
Supervisor: P. Gabriel. \\
Translated by K. Baur, May 2013 \\
\end{abstract}

\maketitle

\tableofcontents

\newpage

%
\section*{Introduction}
%
A quiver of type $\widetilde{A}_n$ has the following form \\
\psfragscanon
\psfrag{0}{\tiny $_{0}$}
\psfrag{1}{\tiny $_{1}$}
\psfrag{n}{\tiny $_{n}$}
\psfrag{n-1}{\tiny $_{n-1}$}
\includegraphics[scale=.6]{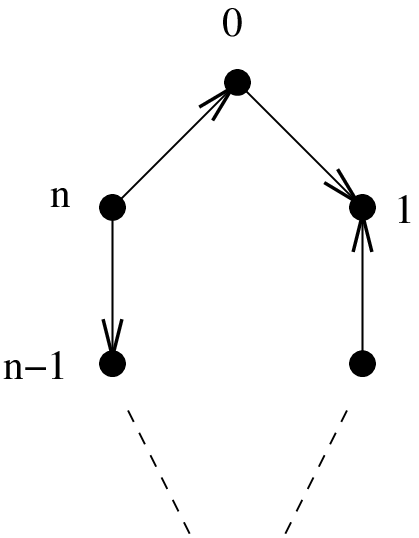}
with arbitrary orientation of the arrows. \\ 

The classification of (finite-dimensional) representations of quivers 
of this type is known. The goal of this thesis is, to describe the indecomposable representations and their morphisms 
by giving a quiver with relations. For this, we will restrict to the case of the quiver $K$ of the following form. 

$$
\begin{array}{cl}
\psfragscanon
\psfrag{K}{$K:$}
\psfrag{ag}{$_{\alpha_{g}}$}
\psfrag{an1}{$_{\alpha_{n-1}}$}
\psfrag{an}{$_{\alpha_{n}}$}
\psfrag{b0}{$_{\beta_0}$}
\psfrag{b1}{$_{\beta_1}$}
\psfrag{bg1}{$_{\beta_{g-1}}$}
\psfrag{0}{\tiny $_{0}$}
\psfrag{1}{\tiny $_{1}$}
\psfrag{2}{\tiny $_{2}$}
\psfrag{g-1}{\tiny $_{g-1}$}
\psfrag{g}{\tiny $_{g}$}
\psfrag{g+1}{\tiny $_{g+1}$}
\psfrag{n}{\tiny $_{n}$}
\includegraphics[scale=.5]{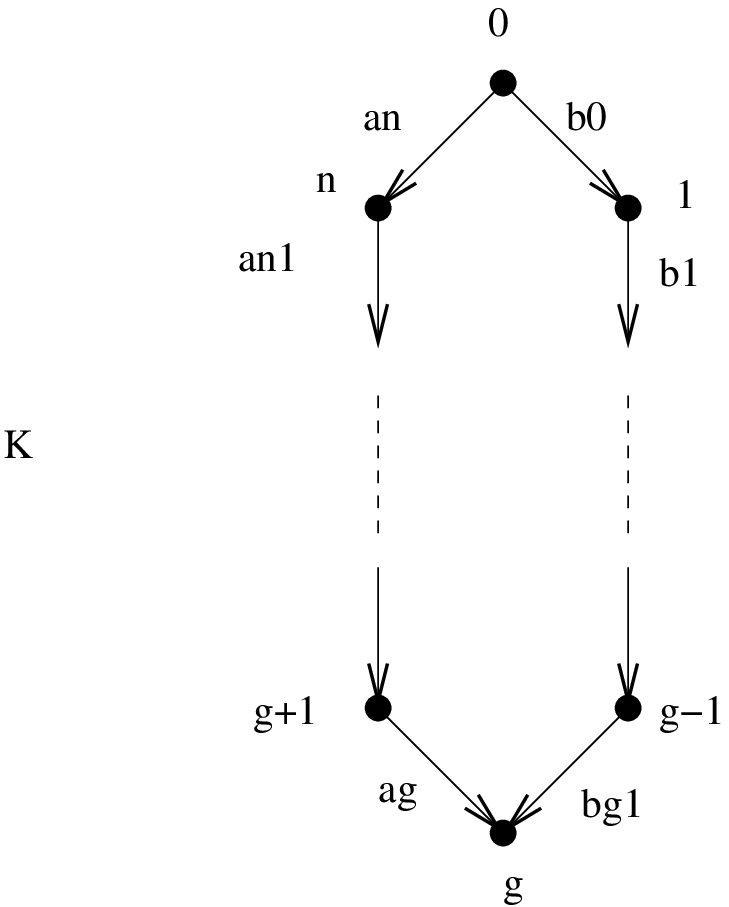} 
& 
\mbox{for some $1\le g\le n$}. \\ 
 &  \mbox{$g$ is the number of arrow in clockwise orientation,} \\ 
 & \mbox{$h:=n+1-g$ is the number of anti clockwise arrows.}
\end{array}
$$

\vskip 5pt

The running example throughout the thesis will be the case $g=3$, $n=4$: 
\begin{center}
\psfragscanon
\psfrag{K}{$K:$}
\psfrag{a4}{\tiny $\alpha_4$}
\psfrag{a3}{\tiny $\alpha_3$}
\psfrag{b0}{\tiny $\beta_0$}
\psfrag{b1}{\tiny $\beta_1$}
\psfrag{b2}{\tiny $\beta_2$}
\psfrag{0}{\tiny $_{0}$}
\psfrag{1}{\tiny $_{1}$}
\psfrag{2}{\tiny $_{2}$}
\psfrag{3}{\tiny $_{3}$}
\psfrag{4}{\tiny $_{4}$}
\includegraphics[scale=.6]{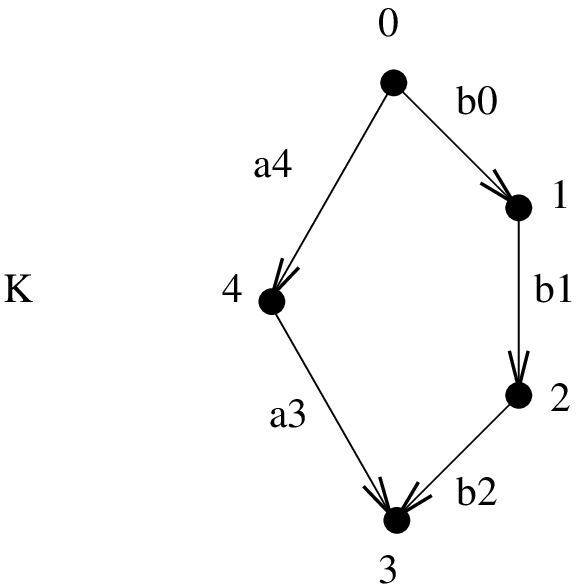}
\end{center}
%
\section{Classification of the representations}
%
We fix an algebraically closed field $k$ (commutative). 

A {\em representation $V$} of $K$ is a collection of $k$-vector spaces 
$V(x)$ for $x$ a vertex of $K$ and of $k$-linear maps $V(\gamma):V(x)\to V(y)$ for 
every arrow $\gamma:x\to y$ in $K$. 

A {\em morphism $\mu$:} $V\to W$ between two representations $V$ and $W$ of $K$ is a 
collection of $k$-linear maps $\mu(x):V(x)\to W(x)$ for $0\le x\le n$ such that we have 
$W(\gamma)\mu(x)=\mu(y)V(\gamma)$ for every arrow $\gamma:x\to y$ in $K$. Composition 
of morphisms is defined as composition of the linear maps in every vertex. 

The finite dimensional representations of $K$ form a $k$-category, we write $\rep K$ for it. 
For $V$, $W\in \rep K$ let $\Hom(V,W)$ be the space of morphisms between $V$ and $W$. 

To classify the representations we need another notion: 

A {\em (clockwise) walk}\footnote{Wanderweg in the original.} in $K$ is a pair $(p,q)\in \ZZ\times \ZZ$ with $p\le q$. To any 
walk $w$ we associate a representation $V_w$ of $K$ as follows: Assume first $0\le p\le n$ and 
set $\overline{i}=i + (n+1)\ZZ$ in 
$\ZZ/(n+1)\ZZ$, let $E_w$ be a vector space with basis $e_p,e_{p+1},\dots, e_q$. Then we define 
$$
V_w(x):=\bigoplus_{\overline{i}=\overline{x}}ke_i\subset E_w\quad  \mbox{for $0\le x\le n$} Ê
$$
and 
$$
 V_w(\beta_x):=\bigoplus_{\overline{i}=\overline{x}}ke_i\to\bigoplus_{\overline{j}=\overline{x+1}}ke_j \quad \mbox{for $0\le x\le g-1$}
$$ 
by setting 
$$ 
V_w(\beta_x)(e_i)=\left\{ \begin{array}{ll} e_{i+1} & \mbox{for $i<q$} \\  0 & \mbox{if $i=q$} \end{array}\right. 
$$
and
$$
 V_w(\alpha_x):=\bigoplus_{\overline{i}=\overline{x+1}}ke_i\to\bigoplus_{\overline{j}=\overline{x}}ke_j \quad \mbox{for $g\le x\le n$}
$$
through 
$$ 
V_w(\alpha_x)(e_i)=\left\{ \begin{array}{ll} e_{i-1} & \mbox{for $i>p$} \\  0 & \mbox{if $i=p$} \end{array}\right. 
$$
For arbitrary $p\in \ZZ$ we set 
$V_{(p,q)}:=V_{(p',q')}$ for $p' \in\{0,\dots,n\}$ with $p'\equiv p$ $\mod n+1$ and $q'=q+p'-p$. 

\begin{ex}
(Recall that $g=3$, $n=4$). For $w=(4,10)$, $V_w$ looks as follows: 
\begin{center}
\psfragscanon
\psfrag{+}{$\oplus$}
\psfrag{k4}{\small$ke_4$}
\psfrag{k5}{\small $ke_5$}
\psfrag{k6}{\small $ke_6$}
\psfrag{k7}{\small $ke_7$}
\psfrag{k8}{\small $ke_8$}
\psfrag{k9}{\small $ke_9$}
\psfrag{k10}{\small $ke_{10}$}
\psfrag{0}{\tiny 0}
\psfrag{1}{\tiny 1}
\psfrag{2}{\tiny 2}
\psfrag{3}{\tiny 3}
\psfrag{4}{\tiny 4}
\includegraphics[scale=.7]{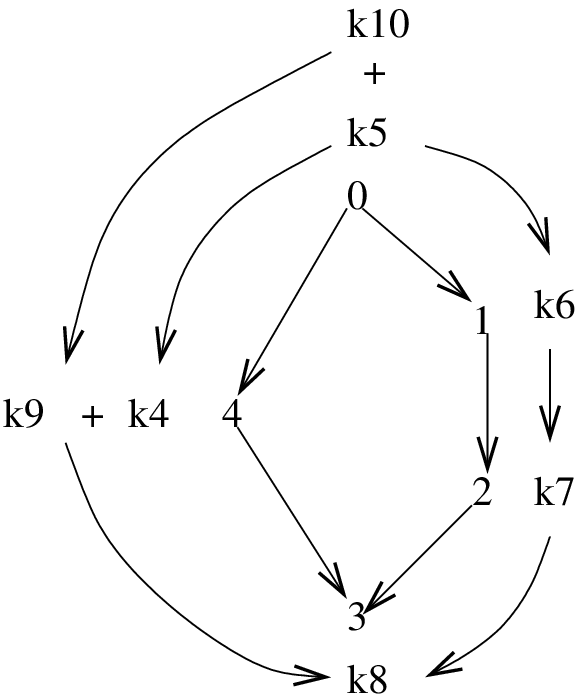}
\end{center}
\end{ex}

In addition to these, we consider representations $V_d^{\lambda}$ for any $\lambda\in k\cup\{\infty\}$, 
for any $d\in\NN\setminus\{0\}$, given as follows: 
$$
V_d^{\lambda}(x)=k^d,\ 0\le x\le n, \ V_d^{\lambda}(\gamma)=
             \left\{\begin{array}{ll} \lambda \id_d + J_d & \mbox{if $\gamma=\beta_{g-1}$} \\ \id_d & \mbox{if $\gamma\ne\beta_{g-1}$}\end{array}\right.
$$
where $J_d$ is the following $d\times d$-matrix: 
$J_d= \begin{pmatrix} 0 &  & \\ 1 & \ddots \\  & \ddots &\ddots\\  & & 1 & 0 \end{pmatrix}  \in k^{d\times d}$. 

Then we have

\begin{prop}
The representations $V_{(p,q)}$ with $0\le p\le n$, $q\ge p$ and the $V_d^{\lambda}$ for $\lambda\in k\cup\{0\}$, $d\in\NN\setminus\{0\}$ form 
a complete and irredundant list of indecomposables. 
\end{prop}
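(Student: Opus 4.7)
My strategy is to use the universal covering $\widetilde K$ of $K$: the bi-infinite quiver of type $A_{\infty}^{\infty}$ with vertex set $\ZZ$, where the arrow between $i$ and $i+1$ carries the same orientation as the arrow between $\overline{\imath}$ and $\overline{\imath+1}$ in $K$. Translation by $n+1$ defines a free $\ZZ$-action whose quotient is $K$ and induces a push-down functor $\pi_{*}\colon \rep\widetilde K\to\rep K$. It is a standard fact (elementary case of the Gabriel classification for type $A$) that the indecomposable finite-dimensional representations of $\widetilde K$ are the interval modules $M_{[p,q]}$, and a direct comparison with the definitions in the excerpt gives $\pi_{*}(M_{[p,q]})=V_{(p,q)}$. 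Since no nontrivial $\ZZ$-translate of a finite interval equals itself, the stabilizer is trivial, so by the general properties of push-down along a Galois covering each $V_{(p,q)}$ is indecomposable, and two such modules are isomorphic iff the intervals lie in the same $\ZZ$-orbit; hence the normalization $0\le p\le n$ makes this subfamily irredundant.

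For the family $V_{d}^{\lambda}$, I would compute endomorphisms directly. Every structure map other than $V(\beta_{g-1})$ is the identity, so the compatibility conditions for a morphism $\mu\colon V_{d}^{\lambda}\to V_{d}^{\lambda}$ force $\mu(x)$ to be the same matrix $A$ for all vertices $x$. The remaining condition at $\beta_{g-1}$ says $A$ commutes with $\lambda\id+J_{d}$, equivalently with $J_{d}$, so $A\in k[J_{d}]$; this commutant is a local $k$-algebra of length $d$, hence $V_{d}^{\lambda}$ is indecomposable. The pair $(d,\lambda)$ is recovered from the Jordan data of $V_{d}^{\lambda}(\beta_{g-1})$, so different parameters give non-isomorphic modules, and no $V_{d}^{\lambda}$ is isomorphic to any $V_{(p,q)}$ since the former has all $V(\gamma)$ of full rank $d$, while any $V_{(p,q)}$ has at least one structure map with a nonzero kernel (the arrow sending $e_{p}$ or $e_{q}$ outside the interval).

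Completeness is the essential content. Given an arbitrary indecomposable $V\in\rep K$, I would lift to $\pi^{*}V\in\rep\widetilde K$. If $\pi^{*}V$ admits a finite-dimensional indecomposable direct summand, then it must be an interval module $M_{[p,q]}$; consequently $V$ is a summand of $\pi_{*}(M_{[p,q]})=V_{(p,q)}$, and since the latter is already indecomposable, $V\cong V_{(p,q)}$. Otherwise $\pi^{*}V$ has no finite-dimensional summand and $V$ is periodic: it is determined by its restriction to one fundamental domain of $K$ together with a monodromy automorphism $T$ encoding how the pieces glue across the $\ZZ$-action, and indecomposability forces $T$ to be a single Jordan block with eigenvalue $\lambda\in k$ and size $d$, giving $V\cong V_{d}^{\lambda}$ (with $\lambda=\infty$ corresponding to nilpotent $V(\beta_{g-1})$). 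The main obstacle is this last step: canonically isolating the monodromy as a linear invariant of a periodic indecomposable and verifying that Jordan normal form exhausts the possibilities, so as to obtain exactly the parametrization by $(d,\lambda)\in(\NN\setminus\{0\})\times(k\cup\{\infty\})$.
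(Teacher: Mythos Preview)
The paper does not prove this proposition at all; immediately after stating it, the thesis writes ``For a proof we refer to~\cite{gr}.'' So there is no argument in the paper to compare against, and your sketch already goes well beyond what the thesis itself supplies.

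Your covering-theory route is a standard and essentially correct approach to this classification. One slip worth fixing: your separation argument claims that every $V_d^{\lambda}$ has all structure maps of full rank, but this fails for $\lambda=0$, where $V(\beta_{g-1})=J_d$ is nilpotent (and similarly for $\lambda=\infty$, however that case is normalised). In fact the two families as literally written in the proposition are \emph{not} irredundant: for example $V_1^{0}\cong V_{(g,\,g+n)}$, and more generally each $V_d^{0}$ and $V_d^{\infty}$ already occurs among the $V_{(p,q)}$. The intended irredundant list --- consistent with the paper's own later definition of $R^{\lambda}$, which is stated only for $\lambda\in k\setminus\{0\}$ --- restricts the band family to $\lambda\in k\setminus\{0\}$. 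You did not notice this overlap, and your distinguishing argument needs to be rewritten accordingly (comparing endomorphism rings, or simply invoking the string/band dichotomy, works cleanly).

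On completeness you correctly identify the genuine obstacle: showing that an indecomposable which is not a push-down of an interval module is ``periodic'' in your sense and then governed by a single Jordan block. As written this step is only gestured at; making it rigorous requires either the functorial/matrix-problem analysis of \cite{gr} (or Donovan--Freislich, Nazarova), or the string/band classification for this class of algebras. That is exactly the content the thesis outsources to the reference.
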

For a proof we refer to~\cite{gr}. 

\vskip 5pt

We introduce some notation. If $p\equiv 1,\dots, g$ $\mod n+1$ and 
$q\equiv g,g+1,\dots, n$ $\mod n+1$ then we call $w=(p,q)$ and $V_w$ 
{\em post-projective}. $P$ denotes the category whose objects are the post-projectives $V_{(p,q)}$. 

If $p\equiv 0,g+1,g+2,\dots,n$ $\mod n+1$ and $q\equiv 0,1,\dots,g-1$
$\mod n+1$
we call $w=(p,q)$ {\em pre-injective}. The category whose 
objects are the pre-injectives $V_w$ is denoted by $I$. 

The remaining representations from the list above are called {\em regular}. We write 
$$
\begin{array}{ll}
R^0 & \mbox{for the category with the objects $V_{(p,q)}$ with} 
  \left\{  \begin{array}{l}p\equiv 1,\dots,g\  \mod n+1Ê\\  
          q\equiv 0,\dots,g-1\  \mod n+1 \end{array} \right. \\ 
                                                                                            & \\
R^{\infty} &   \mbox{for the category with the objects $V_{(p,q)}$ with} 
\left\{  \begin{array}{l}p\equiv 0,g+ 1,\dots,n\  \mod n+1Ê\\  
  q\equiv g,\dots, n\ \mod n+1  \end{array} \right. \\ 
                                                                                           & \\
R^{\lambda} &  \mbox{for the category with the objects $V_d^{\lambda}$ 
for $\lambda\in k\setminus\{0\}, \ d\in \NN\setminus\{0\}$} 
\end{array} 
$$

%
\section{The post-projectives}\label{sec:postpr}
%
We start the study of the post-projective objects by ``rolling up'' the quiver $K$ (reversing arrows) to a quiver $\widetilde{K}$: 

Let $\widetilde{K}$ be the quiver whose vertices are $\ZZ$, with arrows $i\stackrel{\alpha_i}{\to} i+1$ for all $i\in\ZZ$ with $i\equiv g,\dots,n$ 
$\mod n+1$ and arrows $i+1\stackrel{\beta_i}{\to}i$ for all $i\in\ZZ$ with $i\equiv 0,1,\dots,g-1$ $\mod n+1$. 
$\widetilde{K}$ determines a quiver $\NN\widetilde{K}$ whose vertices are the pairs $(r,s)\in \NN\times\ZZ$ and whose arrows are 
given by $(r,\gamma):(r,i)\to (r,j)$ as well as $(r,\gamma'):(r,j)\to(r+1,i)$ for any arrow $\gamma:i\to j$ of 
$\widetilde{K}$ and any $r\in\NN$. 
A picture for the quiver $\NN\widetilde{K}$ for the running example is in Figure~\ref{fig:K-tilde}. 

\begin{figure}[ht]
\begin{center}
\psfragscanon
\psfrag{00}{\tiny $_{(0,0)}$}
\psfrag{01}{\tiny $_{(0,1)}$}
\psfrag{02}{\tiny $_{(0,2)}$}
\psfrag{03}{\tiny $_{(0,3)}$}
\psfrag{04}{\tiny $_{(0,4)}$}
\psfrag{05}{\tiny $_{(0,5)}$}
\psfrag{0-1}{\tiny $_{(0,-1)}$}
\psfrag{0-2}{\tiny $_{(0,-2)}$}
\psfrag{12}{\tiny $_{(1,2)}$}
\psfrag{13}{\tiny $_{(1,3)}$}
\psfrag{14}{\tiny $_{(1,4)}$}
\psfrag{22}{\tiny $_{(2,2)}$}
\psfrag{23}{\tiny $_{(2,3)}$}
\psfrag{24}{\tiny $_{(2,4)}$}
\psfrag{0b0}{\tiny $_{(0,\beta_0)}$}
\psfrag{0b1}{\tiny $_{(0,\beta_1)}$}
\psfrag{1b0}{\tiny $_{(1,\beta_0)}$}
\psfrag{0b0'}{\tiny $_{(0,\beta_0')}$}
\psfrag{0a3}{\tiny $_{(0,\alpha_3)}$}
\psfrag{0a3'}{\tiny $_{(0,\alpha_3')}$}
\psfrag{0K}{\tiny $_{\{0\}\times \widetilde{K}}$}
\psfrag{1K}{\tiny $_{\{1\}\times \widetilde{K}}$}
\includegraphics[scale=.6]{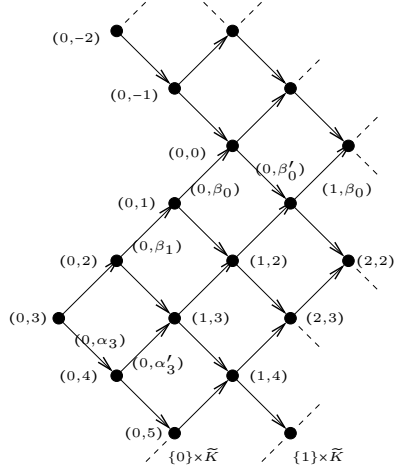}
\caption{$\widetilde{K}$ for the running example, $g=3$, $n=4$}
\label{fig:K-tilde}
\end{center}
\end{figure}

\vskip 5pt

We now define a functor $F:k\NN\widetilde{K}\to P$ 
(for any quiver $Q$ we use $kQ$ to denote the $k$-category of the paths of $Q$). 
We first give a map $F$ on the objects: 

To any vertex $(r,s)\in\NN\widetilde{K}$ there is exactly one path of maximal length in $\NN\widetilde{K}$ ending at $(r,s)$ that is a composition 
of arrows $\gamma:(l,i)\to (l',i+1)$. The starting point $(0,p)$ of this path determines $p\in\ZZ$ with 
$p\equiv 1,\dots,g$ $\mod n+1$. 

Analoguously, the path of maximal length ending at $(r,s)$ that is a composition of arrows 
$\gamma:(l,i)\to (l',i-1)$ starts at a point $(0,q)$ for some 
$q\in \ZZ$ with $q\equiv q,\dots,n$ 
$\mod n+1$. 
Using this, we set 
$$
F(r,s):=V_{(p,q)}\in P. 
$$
Facts: This map is surjective and the fibre of $(r,s)$ is the set of vertices $(r,s')$ with $s'\equiv s$ $\mod n+1$ 

The functor $F$ is determined uniquely, if we associate to any arrow $\gamma:X\to Y$ in $\NN\widetilde{K}$ a morphism 
$F\gamma:FX\to FY$. 
For this, we consider for any vertex $(r,s)$ the arrows $\gamma_1:(r,s)\to (r_1,s+1)$ and 
$\gamma_2:(r,s)\to (r_2,s-1)$. 
If $F(r,s)=V_{(p,q)}$ we have 
$$
\begin{array}{ll}
F(r_1,s+1)=& \left\{
                                  \begin{array}{ll} 
                                             V_{(p,q+1)}& \mbox{for $q\equiv g+1,\dots,n-1$ $\mod n+1$} \\   
                                             V_{(p,q+1+g)} & \mbox{for  $q\equiv n$ $\mod n+1$}
                                   \end{array} \right. \\
F(r_2,s-1)=& \left\{
                                  \begin{array}{ll} 
                                             V_{(p-1,q)}& \mbox{for $p\equiv 2,\dots, g$  $\mod n+1$} \\   
                                             V_{(p-1-h,q)} & \mbox{for  $p\equiv 1$ $\mod n+1$}
                                   \end{array} \right. 
\end{array}  
$$

Then we can define the morphisms $F\gamma_1$ and $F\gamma_2$: \\
$F\gamma_1:F(r,s)\to F(r_1,s+1)$ is the morphism sending $e_t$ to $e_t$ for $q\le t\le q$; \\
$F\gamma_2:F(r,s)\to F(r_2,s-1)$ is the morphism with $e_t\mapsto e_t$ for $p\le t\le q$ with $p\equiv 2,\dots, g$ 
$\mod n+1$, 
if $p\equiv 1$ $\mod n+1$, it sends $e_t$ to $e_{t+n+1}$. 

\begin{ex}
$F(1,1)\stackrel{F(1,\beta_1)}{\longleftarrow} F(1,2)\stackrel{F(0,\beta_1')}{\longleftarrow} F(0,1)$: 

\begin{center}
\psfragscanon
\psfrag{00}{\tiny $_{(0,0)}$}
\psfrag{01}{\tiny $_{(0,1)}$}
\psfrag{02}{\tiny $_{(0,2)}$}
\psfrag{03}{\tiny $_{(0,3)}$}
\psfrag{04}{\tiny $_{(0,4)}$}
\psfrag{0-1}{\tiny $_{(0,-1)}$}
\psfrag{0-2}{\tiny $_{(0,-2)}$}
\psfrag{12}{\tiny $_{(1,2)}$}
\psfrag{e1}{$_{e_1}$}
\psfrag{e2}{$_{e_2}$}
\psfrag{e3}{$_{e_3}$}
\psfrag{e4}{$_{e_4}$}
\psfrag{e5}{$_{e_5}$}
\psfrag{e6}{$_{e_6}$}
\psfrag{e7}{$_{e_7}$}
\psfrag{e8}{$_{e_8}$}
\psfrag{e9}{$_{e_9}$}
\psfrag{1b1}{\tiny $_{(1,\beta_1)}$}
\psfrag{0b1}{\tiny $_{(0,\beta_1)}$}
\psfrag{0b1'}{\tiny $_{(0,\beta_1')}$}
\includegraphics[scale=.7]{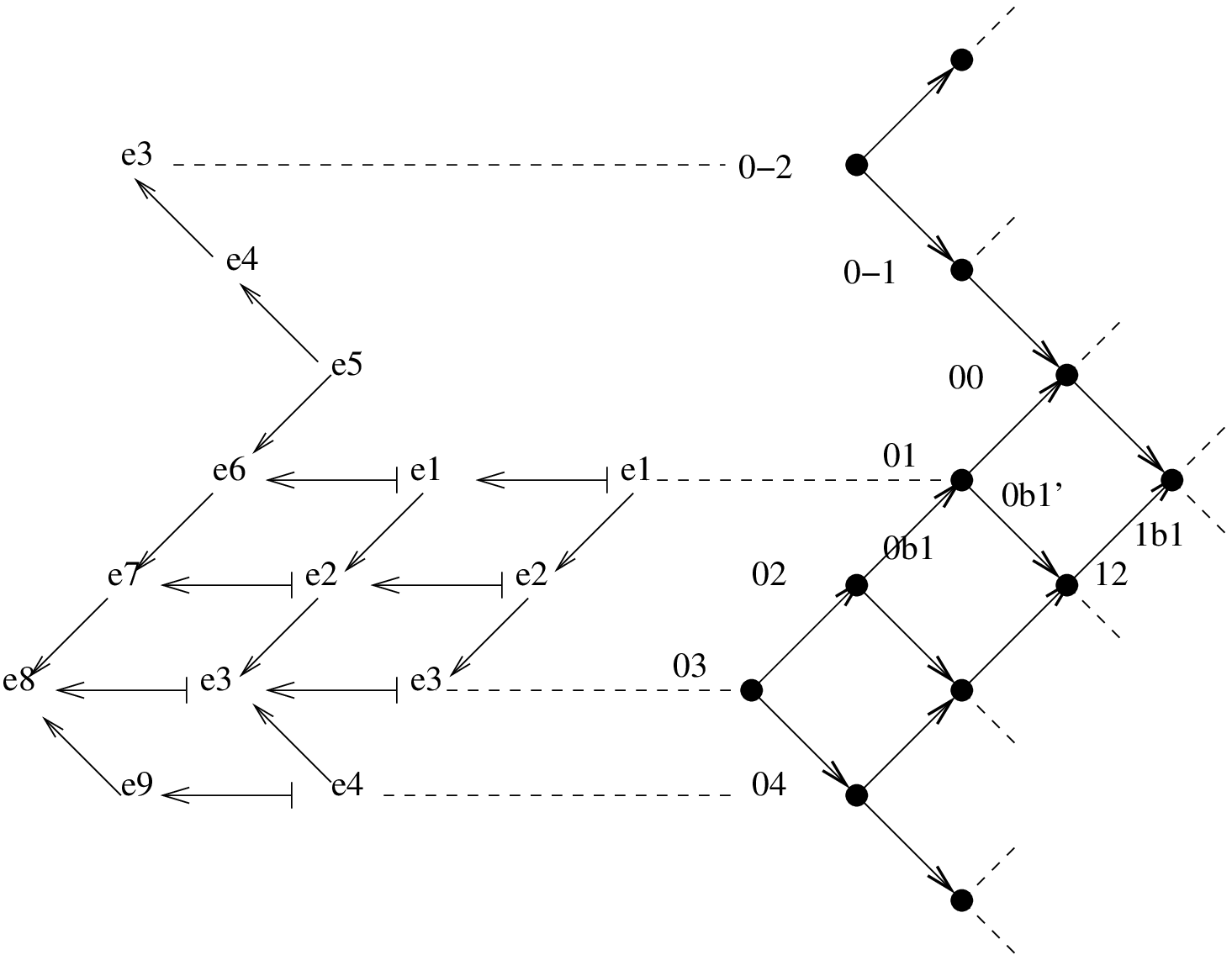}
\end{center}
\end{ex}

For all arrows $\gamma$, $\gamma'$, $\delta$, $\delta'$ of $\NN\widetilde{K}$ of the form 
$$
\xymatrix@R=+0.6pc @C=+0.1pc{
 & Y_1\ar[rd]^{\gamma'} & \\ 
X\ar[ru]^{\gamma}\ar[rd]_{\delta} & & Z \\
 & Y_2\ar[ru]_{\delta' }
}
\quad \mbox{we have $F\gamma'\circ F\gamma=F\delta'\circ F\delta$.}
$$
We will sometimes call this a {\em diamond\footnote{This definition is not in the diploma thesis. It is 
introduced to shorten the wording at 
times. 
A diamond will consist of four objects and four arrows, 
with an object $X$ at the start and an object $Z$ at the end.} starting at 
$X$ and ending at $Z$}, with arrows $X\stackrel{\tiny \gamma}{\to}Y_1\stackrel{\tiny \gamma'}{\to}Z$, 
$X\stackrel{\tiny \delta}{\to}Y_2\stackrel{\tiny \delta'}{\to}Z$.  

\vskip 5pt

Let $J$ be the ideal of $k\NN\widetilde{K}$ generated by the elements $\gamma'\gamma-\delta'\delta$ (whenever the four arrows $\gamma$, 
$\gamma'$, $\delta$, $\delta'$ are in a diamond as above). Then $J$ is annihilated by $F$, hence $F$ induces a functor $\overline{F}:k(\NN\widetilde{K})\to P$ 
where $k(\NN\widetilde{K}):=k\NN\widetilde{K}/J$. 

\begin{prop}
For any post-projective $V_{(p,q)}\in P$ and any vertex $Y$ of $\NN\widetilde{K}$ we have: \\
$\overline{F}: k(\NN\widetilde{K})\to P$ gives rise to an isomorphism 
$$
\begin{array}{lcl}
 \bigoplus_{\overline{F}X=V_{(p,q)}}\Hom(X,Y) &\stackrel{\sim}{\longrightarrow} & \Hom(V_{(p,q)},\overline{F}Y) \\ 
(\mu_X)_{\overline{F}X=V_{(p,q)}}& \longmapsto  &  \sum_{\overline{F}X=V_{(p,q)}}\overline{F}\mu_X
\end{array} 
$$
\end{prop}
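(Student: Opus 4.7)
The plan is to compute both sides of the proposed isomorphism explicitly and match them basis-to-basis. The key point is that $k(\NN\widetilde{K})$ behaves like a universal cover (a mesh category) of the post-projective component $P$, with fibres parametrised by shifts in the cyclic $\ZZ/(n+1)$ structure.

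\emph{Step 1: Hom spaces in $k(\NN\widetilde{K})$.} I would first show that $\Hom(X,Y)$ in $k(\NN\widetilde{K})$ is at most one-dimensional for any pair of vertices $X, Y$. Indeed any two directed paths from $X$ to $Y$ in the grid-like quiver $\NN\widetilde{K}$ can be transformed into one another by iteratively applying the diamond relations generating $J$, so both represent the same element modulo $J$. Hence $\Hom(X,Y) = k \cdot \pi_{X,Y}$ whenever a path $\pi_{X,Y}$ from $X$ to $Y$ exists, and vanishes otherwise; the source of the map in the proposition thus has a $k$-basis indexed by the lifts $X \in \overline{F}^{-1}(V_{(p,q)})$ admitting a directed path to $Y$.

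\emph{Step 2: Hom spaces in $P$.} Writing $\overline{F}Y = V_{(p',q')}$, I would next give a basis of $\Hom(V_{(p,q)}, V_{(p',q')})$. Since each weight space $V_{(p,q)}(\overline{i})$ is one-dimensional with basis $e_i$, while $V_{(p',q')}(\overline{i}) = \bigoplus_{j \equiv i,\, p' \le j \le q'} k e_j$, commutativity with all structure maps $V(\beta_x)$ and $V(\alpha_x)$ forces every morphism to be a $k$-linear combination of the shift morphisms $\phi_c$ defined by $\phi_c(e_i) = e_{i+c}$ if $p' \le i+c \le q'$ and $\phi_c(e_i) = 0$ otherwise, where $c$ runs through multiples of $n+1$. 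A short check on the boundary behaviour at $i = p$ and $i = q$ shows that $\phi_c$ is actually well-defined as a morphism precisely when the shifted interval $[p+c, q+c]$ fits inside $[p',q']$ in the appropriate endpoint-sensitive sense, and for these $c$ the $\phi_c$ are linearly independent.

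\emph{Step 3: Matching and main obstacle.} Finally I would match the two bases. Each lift $X \in \overline{F}^{-1}(V_{(p,q)})$ has, via the construction of $F$ on objects, a distinguished pair of integer endpoints $(p_X, q_X)$ lifting $(p, q)$, and by the $(n+1)$-periodicity of $\widetilde{K}$ one has $p_X - p = q_X - q =: c_X \in (n+1)\ZZ$. Using the explicit formulas for $F\gamma_1$ and $F\gamma_2$ from the excerpt --- in particular the two exceptional $(n+1)$-shifts occurring when $p \equiv 1$ or $q \equiv n \pmod{n+1}$ --- an induction on the length of $\pi_{X,Y}$ shows that $\overline{F}(\pi_{X,Y}) = \phi_{c_X}$. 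Combined with Steps 1 and 2 this yields the claimed bijection. The main obstacle is precisely the bookkeeping in this last step: one must verify that the set of admissible shifts $c$ (for which $\phi_c \ne 0$) matches exactly the set of lifts $X$ admitting a directed path to $Y$, a combinatorial compatibility that reflects, upstairs, the cyclic geometry of $K$ encoded by the two exceptional shifts in the formulas for $F$.
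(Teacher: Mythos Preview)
The paper does not supply a proof of this Proposition; it is stated and then immediately used (via the quotient $Q^P = \NN\widetilde{K}/\!\sim$) to deduce the Corollary $k(Q_m^P)\cong P_m$. So there is nothing in the paper to compare your argument against.

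Your three-step plan is the natural one and would succeed, but Step~2 contains a genuine misstatement that you must repair. The spaces $V_{(p,q)}(x)$ are \emph{not} one-dimensional in general: as soon as $q-p\ge n+1$ the interval $[p,q]$ wraps around the cycle and several basis vectors $e_i$ with $i\equiv x\pmod{n+1}$ sit at the same vertex $x$. (Indeed the interesting post-projectives, and in particular $\overline{F}Y$ for $Y$ deep in $\NN\widetilde{K}$, all have this property.) The correct argument is: any morphism $\mu$ sends $e_i$ to a combination $\sum_{c\in(n+1)\ZZ} a_{i,c}\,e_{i+c}$, and commutativity with the structure maps --- which act as $e_i\mapsto e_{i\pm 1}$ away from the endpoints $p,q$ --- forces $a_{i,c}=a_{i+1,c}$ whenever both are defined, hence $a_{i,c}=a_c$ is independent of $i$. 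This yields $\mu=\sum_c a_c\,\phi_c$ with your shift morphisms $\phi_c$, and the boundary analysis at $i=p$ and $i=q$ (using that $p\equiv 1,\dots,g$ and $q\equiv g,\dots,n$ for post-projectives) then tells you exactly which $c$ give a nonzero $\phi_c$. With that fix, Steps~1 and~3 go through as you outlined, and the bijection between admissible shifts $c$ and lifts $X$ with a path to $Y$ is precisely the covering behaviour of $\NN\widetilde{K}\to Q^P$.
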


The fibres of $F$ define an equivalence relation $\sim$ on $\NN\widetilde{K}_v$ and on $\NN\widetilde{K}_a$ (where we use 
$Q_v$ to denote the vertices of a quiver $Q$ and $Q_a$ to denote the arrows of $Q$). 

\vskip 5pt

We then let $Q^P$ be the quiver with $Q_v^P=\NN\widetilde{K}_v/\sim$ and $Q^P_a:=\NN\widetilde{K}_a/\sim$, denoting the 
equivalence class of $(r,s)\in\NN\widetilde{K}_v$ by $(r,s)_P$ and the equivalence class of $\gamma\in\NN\widetilde{K}_a$ by 
$\gamma_P$. 
We can visualize $Q^P$ as a tube. In our running example: 
$Q^P$ (page 8 of the thesis: note that in the thesis, the subscripts $P$ are not all there). 

\psfragscanon
\psfrag{00}{\tiny $_{(0,0)_P}$}
\psfrag{01}{\tiny $_{(0,1)_P}$}
\psfrag{02}{\tiny $_{(0,2)_P}$}
\psfrag{03}{\tiny $_{(0,3)_P}$}
\psfrag{04}{\tiny $_{(0,4)_P}$}
\psfrag{13}{\tiny $_{(1,3)_P}$}
\psfrag{23}{\tiny $_{(2,3)_P}$}
\psfrag{14}{\tiny $_{(1,4)_P}$}
\psfrag{10}{\tiny $_{(1,0)_P}$}
\includegraphics[scale=.6]{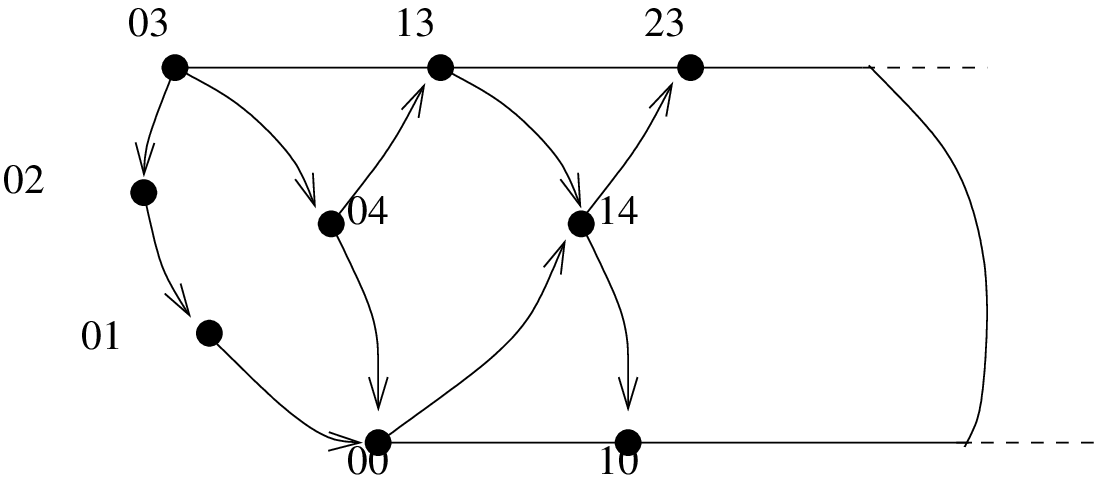}

It is more convenient to cut open $Q^P$ along the line through $(0,3)_P$ and $(1,3)_P$: 
\begin{figure}[ht]
\begin{center}
\psfragscanon
\psfrag{00}{\tiny $_{(0,0)}$}
\psfrag{01}{\tiny $_{(0,1)}$}
\psfrag{02}{\tiny $_{(0,2)}$}
\psfrag{03}{\tiny $_{(0,3)}$}
\psfrag{04}{\tiny $_{(0,4)}$}
\psfrag{13}{\tiny $_{(1,3)}$}
\psfrag{14}{\tiny $_{(1,4)}$}
\psfrag{ghm3}{\tiny $_{(ghm,3)}$}
\psfrag{ghm0}{\tiny $_{(ghm,0)}$}
\psfrag{0b0}{\tiny $_{(0,\beta_0)}$}
\psfrag{0a4}{\tiny $_{(0,\alpha_4)}$}
\psfrag{0a3}{\tiny $_{(0,\alpha_3)}$}
\psfrag{QmP}{\tiny $_{Q_m^P}$}
\psfrag{QP}{\tiny $_{Q^P}$}
\includegraphics[scale=.6]{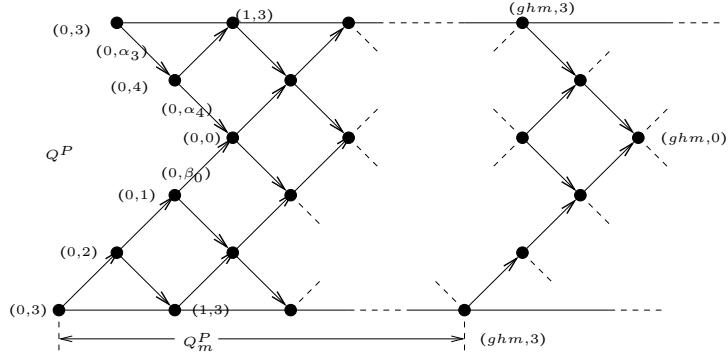}
\caption{$\widetilde{K}$ for the running example, $g=3$, $n=4$}
\label{fig:K-tilde-open}
\end{center}
\end{figure}

The functor $F:k\NN\widetilde{K}\to P$ determines, in a natural way, a functor $F_P:kQ^P\to P$ such that the map on the objectives 
is bijective. 

In the calcuations later we will restrict to certain subcategories $P_m$ of $P$. For this, we define for any $m\in \NN$ the full subquiver 
$Q_m^P$ of $Q^P$ on the vertices $(r,s)_P$ with $0\le s\le n$ and $0\le r\le ghm$. 

\begin{rem}
If we study $F_P:Q^P\to P$ more carefully, we can see that 
$$
F_P(r,0)_P=V_{(g-\mbox{MOD}(r,g),g+(n+1)\left[\mbox{DIV}(r,g) + \mbox{DIV}(r,h)+1\right] + \mbox{MOD}(r,h))}
$$
where MOD and DIV are the functions associating to any pair $(p,q)\in \ZZ\times \ZZ$ the uniquely defined numbers 
DIV$(p,q)$ and MOD$(p,q)$ with 
$$
p=\mbox{DIV}(p,q)\cdot q + \mbox{MOD}(p,q)\quad \mbox{and} \quad 0\le \mbox{MOD}(p,q)<q
$$
This expression becomes simpler if we restrict to numbers $r=ghm$ for $m\in \NN$: 
$$
F_P(ghm,0)_P=V_{(g,g+(n+1)(hm+gm+1))} = V_{(g,(n+1)(m(n+1)+1)+g)} 
$$
\end{rem}

We then define $P_m$ as the category whose objects are the $F_pX$ for all vertices $X$ of $Q_m^P$. To any 
$V_{(p,q)}\in P$ there is $m\in\NN$ such that $V_{(p,q)}\in P_m$. 

To make the description more transparent, we now assume $n\ge 2$. The Proposition above leads to the following: 
\begin{cor}
Let $k(Q_m^P)$ be the $k$-category defined by the quiver $Q_m^P$ with the relations $\gamma'\gamma=\delta'\delta$ 
for all arrows $\gamma$, $\gamma'$, $\delta$, $\delta'$ of $Q_m^P$ forming a diamond as above. Then $F_P$ induces 
an isomorphism 
$$
\overline{F}_P:k(Q_m^P)\to P_m
$$
\end{cor}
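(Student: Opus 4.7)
The plan is to deduce the corollary from the Proposition by translating its ``sum over the fibre'' statement into a direct Hom-space equality between $k(Q_m^P)$ and the full subcategory $P_m$. First, $\overline{F}_P$ is bijective on objects: by construction $Q_m^P$ contains exactly one representative of each $\sim$-class arising from vertices $(r,s)$ with $0 \le s \le n$ and $0 \le r \le ghm$, and $P_m$ is defined as the full subcategory on $\{F_P X_P : X_P \in Q_m^P\}$. Moreover, the diamond relations in $Q_m^P$ are images of diamond relations in $\NN\widetilde{K}$, and since the ideal $J \subset k\NN\widetilde{K}$ generated by the latter is annihilated by $F$, the induced functor $\overline{F}_P:k(Q_m^P) \to P_m$ is well defined.

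For full faithfulness, I would fix $X_P, Y_P \in Q_m^P$ and pick a representative $Y \in \NN\widetilde{K}$ of $Y_P$. The central step is the linear isomorphism
\[
  \Hom_{k(Q_m^P)}(X_P, Y_P) \;\cong\; \bigoplus_{\overline{F}X = F_P X_P} \Hom_{k(\NN\widetilde{K})}(X, Y).
\]
Any path in $Q^P$ from $X_P$ to $Y_P$ lifts uniquely to $\NN\widetilde{K}$ once the endpoint $Y$ is fixed: walking backwards from $Y$, each arrow class has a unique preimage with the prescribed target, so both the source $X \in F^{-1}(F_P X_P)$ and the lift are determined. Conversely, the projection of any path in $\NN\widetilde{K}$ from some such $X$ to $Y$ yields a path in $Q^P$ from $X_P$ to $Y_P$. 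Because the $r$-coordinate is non-decreasing along arrows of $\NN\widetilde{K}$ and both endpoints have $r$-coordinate in $\{0,\dots,ghm\}$, every such path stays within $Q_m^P$, so $\Hom_{k(Q_m^P)}(X_P, Y_P) = \Hom_{k(Q^P)}(X_P, Y_P)$. Diamond relations on either side match under this correspondence, so the displayed isomorphism holds. Combining it with the Proposition gives $\Hom_{k(Q_m^P)}(X_P, Y_P) \cong \Hom_P(F_P X_P, F_P Y_P)$.

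The main obstacle is the path-lifting bijection underlying the displayed isomorphism: one must check that composing lifts is compatible with the diamond relations on both sides and that only finitely many summands contribute on the right. The latter follows because paths in $\NN\widetilde{K}$ strictly increase $r$ while changing $s$ by $\pm 1$ per arrow, so for fixed $Y$ only finitely many $X$ in the fibre $F^{-1}(F_P X_P)$ admit a path to $Y$. Once this is secured, the Proposition closes the argument.
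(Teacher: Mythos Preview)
The paper does not prove this corollary; it simply states that it follows from the preceding Proposition. Your argument---unique path lifting from $Q^P$ to the cover $\NN\widetilde{K}$ once the endpoint is fixed, together with monotonicity of the $r$-coordinate to confine lifted paths to the strip $0\le r\le ghm$---is exactly the standard way to unpack such a covering statement, and it is correct.

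One small slip: near the end you write that paths in $\NN\widetilde{K}$ \emph{strictly} increase $r$, contradicting your own earlier (correct) observation that $r$ is only non-decreasing (arrows of type $(r,\gamma)$ keep $r$ fixed). This does not harm the argument, since the finiteness of nonzero summands on the right of your displayed isomorphism is automatic from the Proposition: the target $\Hom(V_{(p,q)},\overline{F}Y)$ is a Hom-space between finite-dimensional representations, hence finite-dimensional.
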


%
\section{The pre-injectives}\label{sec:pre-inj}
%
The following construction allows us to reduce the description of the pre-injective objects to the post-projective ones: 
We denote by $Q_m^I$ the quiver with a vertex $(r,s)_I$ for any vertex $(r,s)_P$ of $Q_m^P$ and an arrow 
$\gamma_I:(r_2,s_2)_I \to (r_1,s_1)_I$ for any arrow $\gamma_P:(r_1,s_1)_P\to (r_2,s_2)_P$ of $Q_m^P$. 

We view the transition from $Q_m^P$ to $Q_m^I$ as a reflection $S$: we set $S(r,s)_P:=(r,s)_I$ and 
$S(r,s)_I=(r,s)_P$ (for all vertices of $Q_m^P$ resp. of $Q_m^I$). In the example:

\begin{figure}[ht]
\begin{center}
\psfragscanon
\psfrag{00}{\tiny $_{(0,0)}$}
\psfrag{01}{\tiny $_{(0,1)}$}
\psfrag{03}{\tiny $_{(0,3)}$}
\psfrag{ghm3}{\tiny $_{(ghm,3)}$}
\psfrag{ghm0}{\tiny $_{(ghm,0)}$}
\psfrag{QP}{\tiny $_{Q^P}$}
\psfrag{QI}{\tiny $_{Q^I}$}
\includegraphics[scale=.45]{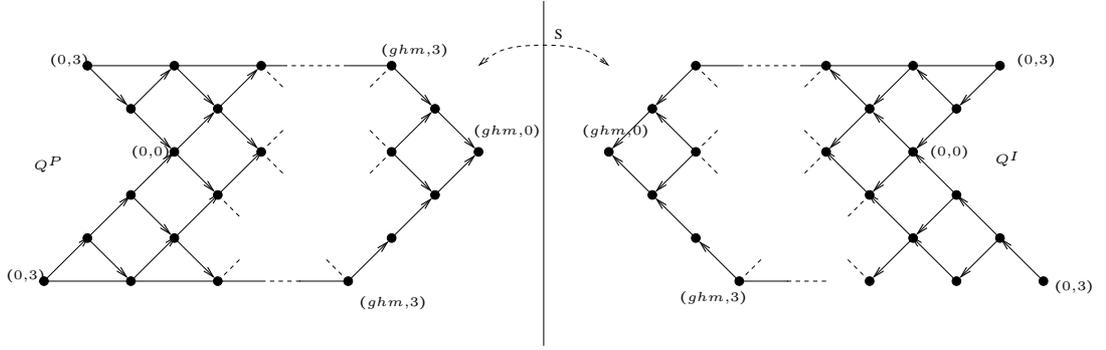}
\caption{Reflection $Q_m^P$ $\leftrightarrow$ $Q_m^I$ for the running example, $g=3$, $n=4$}
\label{fig:QP-QI}
\end{center}
\end{figure}

To get a functor $F_I:kQ_m^I\to I$, we use an equivalent construction on $\rep K$: 
Let $K^T$ be the quiver with vertices $K_v^T=K_v$ and arrows $\gamma^T:y\to x$ for every arrow $\gamma:x\to y$ in $K$. 
For every $V\in\rep K$ we then define $V^T\in \rep K^T$ using the dual space $V(x)^T$ in every vertex $x$ and the dual maps 
$V(\gamma)^T:V(y)^T\to V(x)^T$ for every arrow $\gamma^T:y\to x$ of $K^T$. 

By setting $\mu^T:=(\mu(0)^T,\dots,\mu(n)^T)\in\Hom(W^T,V^T)$ for any $\mu\in\Hom(V,W)$ we get a functor $?^T:\rep K\to \rep K^T$ 
with 
$\Hom(V,W)\stackrel{\sim}{\to}\Hom(W^T,V^T)$ for all $V,W\in\rep K$. 

By tilting (``kippen'') $K^T$ we can associate to any $V^T$ a representation $\overline{V}\in \rep K$; in the example: 

\begin{center}
\psfragscanon
\psfrag{V0}{\tiny $_{V(0)^T}$}
\psfrag{V1}{\tiny $_{V(1)^T}$}
\psfrag{V2}{\tiny $_{V(2)^T}$}
\psfrag{V3}{\tiny $_{V(3)^T}$}
\psfrag{V4}{\tiny $_{V(4)^T}$}
\psfrag{VT-V}{$V^T\mapsto \overline{V}$}
\includegraphics[scale=.6]{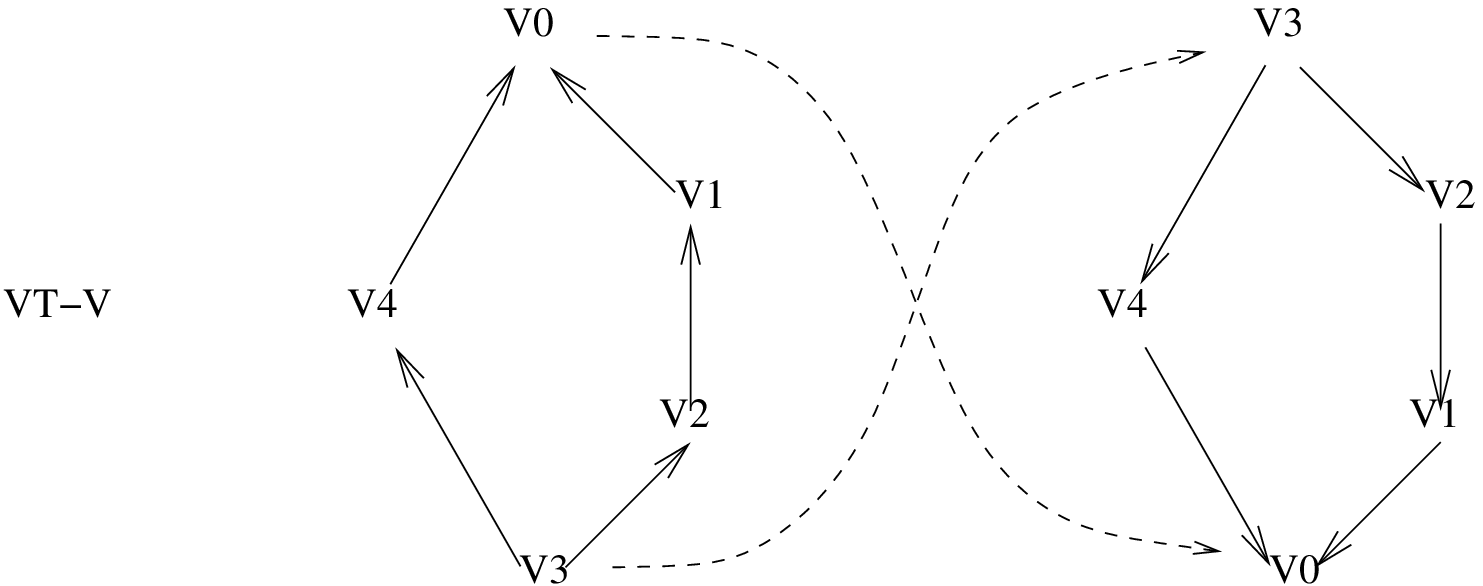}
\end{center}

In general, we denote by $G$ the permutation of $\{0,\dots, n\}$ with $Gx=gx$ for $x=0,\dots, g$ and $Gx=n+g+1-x$ for $x=g+1,\dots, n$. 
$G$ induces a bijection between the arrows of $K^T$ and of $K$, sending $\gamma^T:y\to x$ to the arrow $G\gamma:Gy\to Gx$ of $K$. 

We can then describe a functor $G:\rep K^T\to \rep K$ as follows: For $U\in \rep K^T$ let $GU\in \rep K$ be defined by 
$$
GU(x)=U(Gx) \quad x=0,\dots, n \quad GU(G\gamma)=U(\gamma^T) \quad \mbox{for any arrow $\gamma^T$ of $K^T$}; 
$$
to a morphism $\nu=(\nu(0),\dots,\nu(0))\in\Hom(U,U')$ we assign \\ $G\nu=(\nu(G0),\dots,\nu(Gn))\in \Hom(GU,GU')$. 

The composition $G\cdot ?^T=:\overline{?}$ is then a functor $\overline{?}:\rep K\to \rep K$ with the following property: 

For any $V,W\in \rep K$, the map $\Hom(V,W)\stackrel{\sim}{\to} \Hom(\overline{W},\overline{V})$, $\mu\mapsto \overline{\mu}$, 
is an isomorphims. 

We extend the permutation $G$ to a function $G:\ZZ\to \{0,\dots, n\}$ by setting, for all $z\in\ZZ$, $Gz:=G(\mbox{MOD}(z,n+1))$. 

\begin{prop}
Let $p,q\in \ZZ$ with $p\le q$, let $p'$, $q'\in\ZZ$ with $p'\equiv Gq$ $\mod n+1$, 
$q'\equiv Gp$ $\mod n+1$ and $q'-p'=q-p$. 
Then there is an isomorphism 
$$
\varphi:\overline{V}_{(p,q)}\stackrel{\sim}{\longrightarrow}V_{(p',q')} 
$$
\end{prop}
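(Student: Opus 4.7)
The plan is to exhibit an explicit isomorphism $\varphi$ on basis vectors and verify compatibility with the arrow actions. Unwinding the definitions, $\overline{V}_{(p,q)}(x) = V_{(p,q)}(Gx)^T$ carries the dual basis $\{e_i^* : p\le i\le q,\ i\equiv Gx \mod n+1\}$, while $V_{(p',q')}(x)$ carries the basis $\{e'_j : p'\le j\le q',\ j\equiv x \mod n+1\}$. I would define $\varphi(x)\colon e_i^*\mapsto e'_{p'+q-i}$ at every vertex $x$.

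First I would verify well-definedness. The inclusion $p'\le p'+q-i\le q'$ is immediate from $p\le i\le q$ together with $q'-p'=q-p$. For the congruence $p'+q-i\equiv x \mod n+1$, the key observation is that the two cases in the definition of $G$ combine into the single identity $x+Gx\equiv g \mod n+1$ for every $x\in\{0,\dots,n\}$. Using this together with $p'\equiv Gq$, one computes $p'+q-i\equiv Gq+q-Gx\equiv g-Gx\equiv x \mod n+1$, as required. The map $\varphi$ is then a bijection on basis vectors.

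Next I would check that $\varphi$ commutes with the arrow actions. Using the recipe $\overline{?}=G\cdot{?}^T$ together with the identification of arrows of $K^T$ with arrows of $K$ via $G$ (sending $\delta^T\colon y\to x'$ to $G\delta\colon Gy\to Gx'$), the action of $\beta_x$ on $\overline{V}_{(p,q)}$ is the transpose of $V_{(p,q)}(\beta_{g-x-1})$, which on the dual basis sends $e_j^*\mapsto e_{j-1}^*$ for $j>p$ and $e_p^*\mapsto 0$. Applying $\varphi$ converts this to the shift $e'_{p'+q-j}\mapsto e'_{p'+q-j+1}$, which is exactly the action of $V_{(p',q')}(\beta_x)$, with the boundary case $j=p$ matching $V_{(p',q')}(\beta_x)(e'_{q'})=0$. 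The verification for $\alpha_x$ is symmetric: the relevant transposed map sends $e_j^*\mapsto e_{j+1}^*$ with $e_q^*\mapsto 0$, which under $\varphi$ becomes $e'_{p'+q-j}\mapsto e'_{p'+q-j-1}$ with vanishing at $e'_{p'}$, matching $V_{(p',q')}(\alpha_x)$.

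The main technical obstacle is bookkeeping: three labellings are simultaneously in play (the original walk $(p,q)$, the reflected vertex labels via $G$, and the target walk $(p',q')$), and $G$ is defined case-by-case around $x=g$ and across the wrap-around $x=n$. Once the uniform congruence $x+Gx\equiv g \mod n+1$ is isolated, however, these case splits collapse and the check becomes uniform; the ``seam'' arrows at $x=g-1$ and $x=n$ are automatically handled because the shift $i\mapsto p'+q-i$ is written in residues mod $n+1$ and so absorbs the wrap-around.
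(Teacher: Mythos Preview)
Your proof is correct and follows essentially the same approach as the paper: the paper defines $\varphi(\check{e}_{p+t})=e_{q'-t}$, which (using $q'-p'=q-p$) is exactly your map $e_i^*\mapsto e'_{p'+q-i}$, and then simply says ``one checks'' that this is an isomorphism, whereas you actually supply those checks. Your observation that the two cases of $G$ collapse into the single congruence $x+Gx\equiv g\ \mathrm{mod}\ n+1$ is the right way to make the verification uniform.
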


\begin{proof}
The definition ov $V_{(p,qP}$ is given with the help of a vector space $E_{(p,q)}$ with basis $e_p,\dots,e_q$. Let $\check{e}_p, \dots, \check{e}_q$
be the dual basis in $E_{(p,q)}^T$. Then we have \\
$\overline{V}_{(p,q)}(x)=\oplus_{\overline{i}=\overline{Gx}}k\check{e}_i, \quad 0\le x\le n$. 

One checks that the map $\varphi:\overline{V_{(p,q)}}\to V_{(p',q')}$, given by $\varphi(\check{e}_{p+t}=e_{q'-t}$, $t=0,\dots, p-q$, is an isomorphism. 
\end{proof}

\begin{ex}
We consider our running example, looking at 
$$
V_{(3,9)}\stackrel{?^T}{\longmapsto} V_{(3,9)}^T\stackrel{G}{\longmapsto} \overline{V}_{(3,9)}\stackrel{\varphi}{\longrightarrow} V_{(4,10)}: 
$$

\psfragscanon
\psfrag{e3}{\small$e_3$}
\psfrag{e4}{\small $e_4$}
\psfrag{e5}{\small $e_5$}
\psfrag{e6}{\small $e_6$}
\psfrag{e7}{\small $e_7$}
\psfrag{e8}{\small $e_8$}
\psfrag{e9}{\small $e_9$}
\psfrag{e10}{\small $e_{10}$}
\psfrag{f3}{\small$\check{e}_3$}
\psfrag{f4}{\small $\check{e}_4$}
\psfrag{f5}{ \small$\check{e}_5$}
\psfrag{f6}{ \small$\check{e}_6$}
\psfrag{f7}{\small $\check{e}_7$}
\psfrag{f8}{\small $\check{e}_8$}
\psfrag{f9}{\small $\check{e}_9$}
\psfrag{V39}{$V_{(3,9)}$}
\psfrag{V39T}{$V_{(3,9)}^T$}
\psfrag{V-}{$\overline{V_{(3,9)}}$}
\psfrag{V410}{$V_{(4,10)}$}
\psfrag{?T}{$?^T$}
\psfrag{G}{$G$}
\psfrag{phi}{\small$\varphi$}
\psfrag{iso}{\small$\sim$}
\includegraphics[scale=.65]{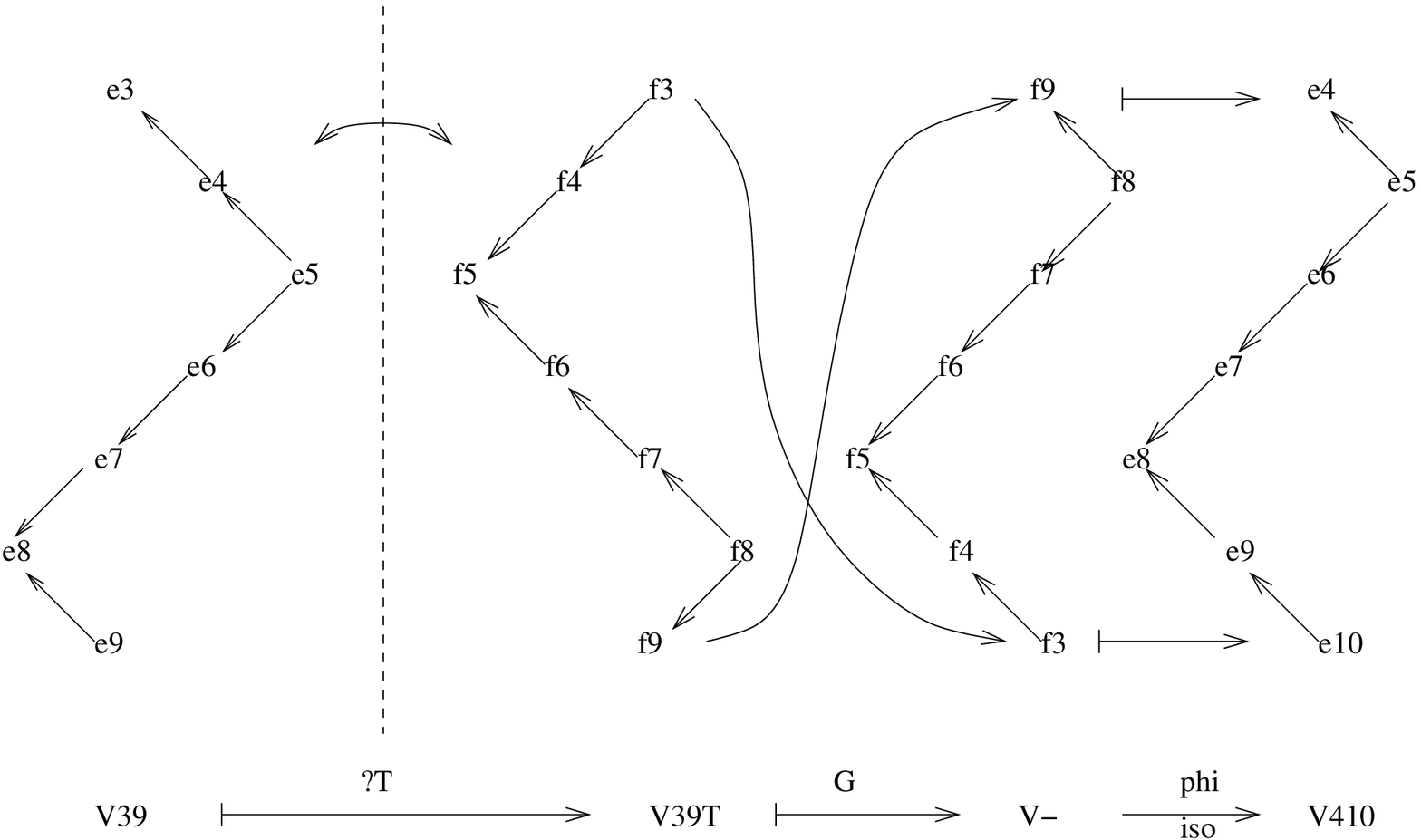}
\end{ex}

For any $X\in Q_m^P$ with $F_PX=V_{(p,q)}$ we define the isomorphism $\varphi:\overline{V_{(p,q)}}\stackrel{\sim}{\longrightarrow}V_{(p',q')}$ by $\varphi_X$. 

We are now ready to define a functor $F_I:kQ_m^I\to I$ by setting $F_IX=\varphi_{SX}(\overline{F_PSX})$ for any vertex $X$ of $Q_m^I$ and, for any arrow 
$\gamma:Y\to X$ of $Q_m^I$ we get a morhpism $F_I\gamma:F_IY\to F_IX$ by setting 
$F_I\gamma=\varphi_{SX}\circ \overline{FS\gamma}\circ\varphi^{-1}_{SY}$ with the arrow $S\gamma:SX\to SY$ of $Q_m^P$. 

\vskip 5pt 

(Let $n\ge 2$.)  
\begin{prop}
Let $k(Q_m^I)$ be the $k$-category defined by the quiver $Q_m^I$ and the relations $\gamma'\gamma=\delta'\delta$ for any diamond from $X$ to $Z$ 
in $Q_m^I$, let $I_m$ be the $k$-category with objects $F_I X$ for 
all $X\in Q_m^I$. Then $F_I$ induces an isomorphism 
$$
\overline{F}_I:k(Q_m^I)\to I_m. 
$$
\end{prop}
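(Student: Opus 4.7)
The strategy is to reduce this statement to its post-projective analogue, the Corollary of Section~\ref{sec:postpr}, by exhibiting $\overline{F}_I$ as the composition of the reflection $S$, the post-projective version $\overline{F}_P$, and the contravariant duality $\overline{?}$, each of which is already known to be an isomorphism.

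First I would check that $F_I$ annihilates the diamond relations, so that it indeed descends to a functor $\overline{F}_I: k(Q_m^I) \to I_m$. A diamond in $Q_m^I$ from $X$ to $Z$ with arrows $\gamma, \gamma', \delta, \delta'$ is sent by $S$ to a diamond in $Q_m^P$ from $SZ$ to $SX$ (with the arrows reversed). The Corollary in Section~\ref{sec:postpr} gives $F_P(S\gamma) \circ F_P(S\gamma') = F_P(S\delta) \circ F_P(S\delta')$ in $P_m$. Applying the contravariant functor $\overline{?}$ reverses the composition order, and conjugating both sides by $\varphi_{SZ}$ and $\varphi_{SX}^{-1}$ yields the desired equality $F_I\gamma' \circ F_I\gamma = F_I\delta' \circ F_I\delta$ in $I_m$.

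Next, I would factor $\overline{F}_I$ as
$$
k(Q_m^I) \xrightarrow{\;S\;} k(Q_m^P)^{\mathrm{op}} \xrightarrow{\overline{F}_P^{\mathrm{op}}} P_m^{\mathrm{op}} \xrightarrow{\;\overline{?}\;} I_m,
$$
where in the last step one also conjugates by the isomorphisms $\varphi_X$ of the previous Proposition. The first functor is an isomorphism of $k$-categories since $Q_m^I$ is by construction the opposite of $Q_m^P$ with the same diamond relations. The second is an isomorphism by the Corollary in Section~\ref{sec:postpr}. The third is an equivalence: the duality $\overline{?}$ induces bijections $\Hom(V,W) \stackrel{\sim}{\to} \Hom(\overline{W}, \overline{V})$, and the Proposition above shows $\overline{V_{(p,q)}} \cong V_{(p',q')}$ with $(p',q')$ pre-injective, so $\overline{?}$ carries $P_m$ bijectively onto (a skeleton of) $I_m$ via the $\varphi_X$.

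Finally, I would verify that this composition really coincides with $\overline{F}_I$. On objects, tracing through the definitions gives $X \mapsto \varphi_{SX}(\overline{F_P SX}) = F_I X$ directly. On morphisms, the definition $F_I\gamma = \varphi_{SX} \circ \overline{F_P S\gamma} \circ \varphi_{SY}^{-1}$ is exactly what the composition produces. Since all three component functors are isomorphisms, so is $\overline{F}_I$. The main obstacle is bookkeeping: one must keep straight that both $S$ and $\overline{?}$ reverse arrows (so their composition is covariant), confirm that the diamond relations correspond under $S$, and check that the natural transformation built from the $\varphi_X$ is compatible with composition — this is where the hypothesis $n \ge 2$ enters, via the corresponding hypothesis in the Corollary of Section~\ref{sec:postpr}.
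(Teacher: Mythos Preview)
Your proposal is correct and follows exactly the approach the paper sets up: the entire Section~\ref{sec:pre-inj} is constructed precisely so that $F_I$ factors through the reflection $S$, the post-projective isomorphism $\overline{F}_P$, and the duality $\overline{?}$ (conjugated by the $\varphi_X$), and the paper states the Proposition without further proof because it is meant to follow immediately from the Corollary in Section~\ref{sec:postpr} via this reduction. You have simply made explicit the bookkeeping that the paper leaves to the reader.
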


%
\section{The regular ones}
%

We now describe $R^0$. 
Let $Z$ be the quiver with vertices $(r,s)\in \NN\times \ZZ$ and arrows $\pi(r,s):(r+1,s+1)\to (r,s)$ and $\rho(r,s):(r,s)\to (r+1,s)$ for 
every vertex $(r,s)$. 

\begin{center}
\psfragscanon
\psfrag{Z}{$Z:$} 
\psfrag{22}{\tiny $_{(2,2)}$}
\psfrag{12}{\tiny $_{(1,2)}$}
\psfrag{01}{\tiny $_{(0,1)}$}
\psfrag{11}{\tiny $_{(1,1)}$}
\psfrag{00}{\tiny $_{(0,0)}$}
\psfrag{0-1}{\tiny $_{(0,-1)}$}
\psfrag{10}{\tiny $_{(1,0)}$}
\psfrag{20}{\tiny $_{(2,0)}$}
\psfrag{21}{\tiny $_{(2,1)}$}
\psfrag{31}{\tiny $_{(3,1)}$}
\psfrag{r11}{\tiny $_{\rho(1,1)}$}
\psfrag{p21}{\tiny $_{\pi(2,1)}$}
\psfrag{p11}{\tiny $_{\pi(1,1)}$}
\psfrag{r00}{\tiny $_{\rho(0,0)}$} 
\includegraphics[scale=.6]{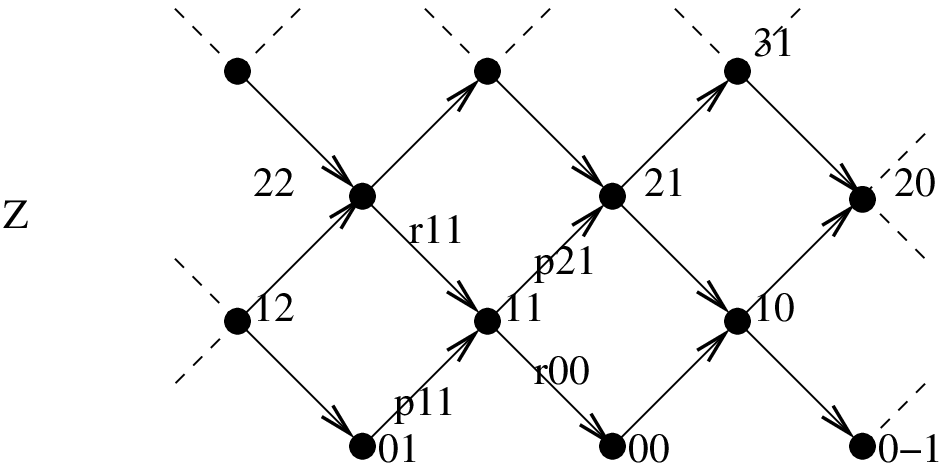}
\end{center}

We define a map $F$ which associates to any vertex $(r,s)$ an element $F(r,s)$ of $R^0$: let $(r,s)$ be a vertex of $Z$, define 
$p\in\{1,\dots,g\}$ by asking $p\equiv s-r$ $\mod g$ and $q'\in\{0,\dots, g-1\}$ by asking $q'\equiv s$ $\mod g$; 
let $t$ be the number of $s'\in\ZZ$ with $s'\equiv 0$ $\mod g$ and $s-r\le s'\le s$. 
\begin{center}
\psfragscanon
\psfrag{r}{\tiny $\rho$}
\psfrag{pi}{\tiny $\pi$}
\psfrag{q'}{\tiny $_{(s\equiv q')}$}
\psfrag{p}{\tiny $_{(s-r\equiv p)}$}
\psfrag{rs}{\tiny $_{(r,s)}$}
\psfrag{0s}{\tiny $_{(0,s)}$}
\psfrag{0s1}{\tiny $_{(0,s'_1)}$}
\psfrag{0s2}{\tiny $_{(0,s'_2)}$}
\psfrag{0st}{\tiny $_{(0,s'_t)}$}
\psfrag{0sr}{\tiny $_{(0,s-r)}$}
\includegraphics[scale=.5]{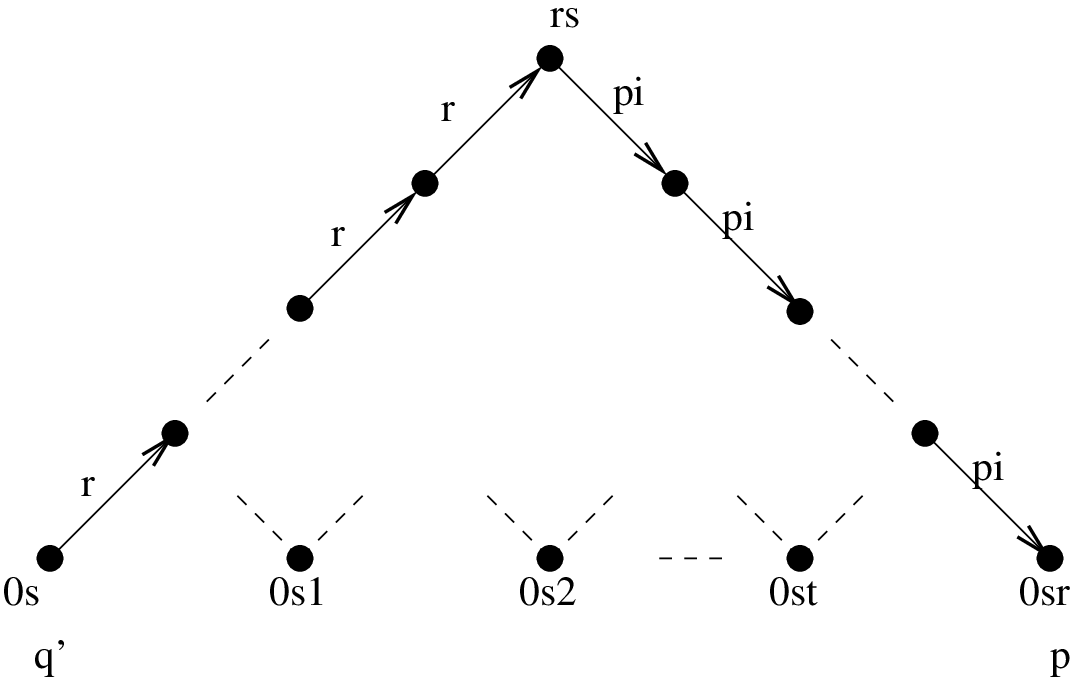}
\end{center}
Then we define $F_{(r,s)}:=V_{(p,q'+t(n+1))}$. 

Next we associate morphisms to the arrows $\pi(r,s)$ and $\rho(r,s)$ (for any vertex $(r,s)$). To describe this, 
assume $F(r,s)=V_{(p,q)}$. 

We let $F\pi(r,s)$ and $F\rho(r,s)$ be the following morphism: 
$$
\begin{array}{ll}
F\pi(r,s):F(r+1,s+1)\to F(r,s) & e_t\longmapsto \left\{ \begin{array}{ll}   e_t & \mbox{if } t=p,\dots, q \\ 0 & \mbox{if } t>q \end{array} \right. \\
 & \\ 
  F\rho(r,s):F(r,s)\to F(r+1,s)  & e_t\longmapsto \left\{ \begin{array}{ll}   e_t & \mbox{if } t=p,\dots, q \mbox{ and }p= 2,\dots,g 
     \\ e_{t+n+1} & \mbox{if } t=p,\dots, q t>q \mbox{ with $p=1$} \end{array} \right. 
\end{array}
$$

\begin{ex}
$F(2,3)\stackrel{F\pi(1,2)}{\longrightarrow} F(1,2)\stackrel{F\rho(1,2)}{\longrightarrow} F(2,2)$: 
\psfragscanon
\psfrag{e1}{$_{e_1}$}
\psfrag{e2}{$_{e_2}$}
\psfrag{e3}{$_{e_3}$}
\psfrag{e4}{$_{e_4}$}
\psfrag{e5}{$_{e_5}$}
\psfrag{e6}{$_{e_6}$}
\psfrag{e7}{$_{e_7}$}
\includegraphics[scale=.55]{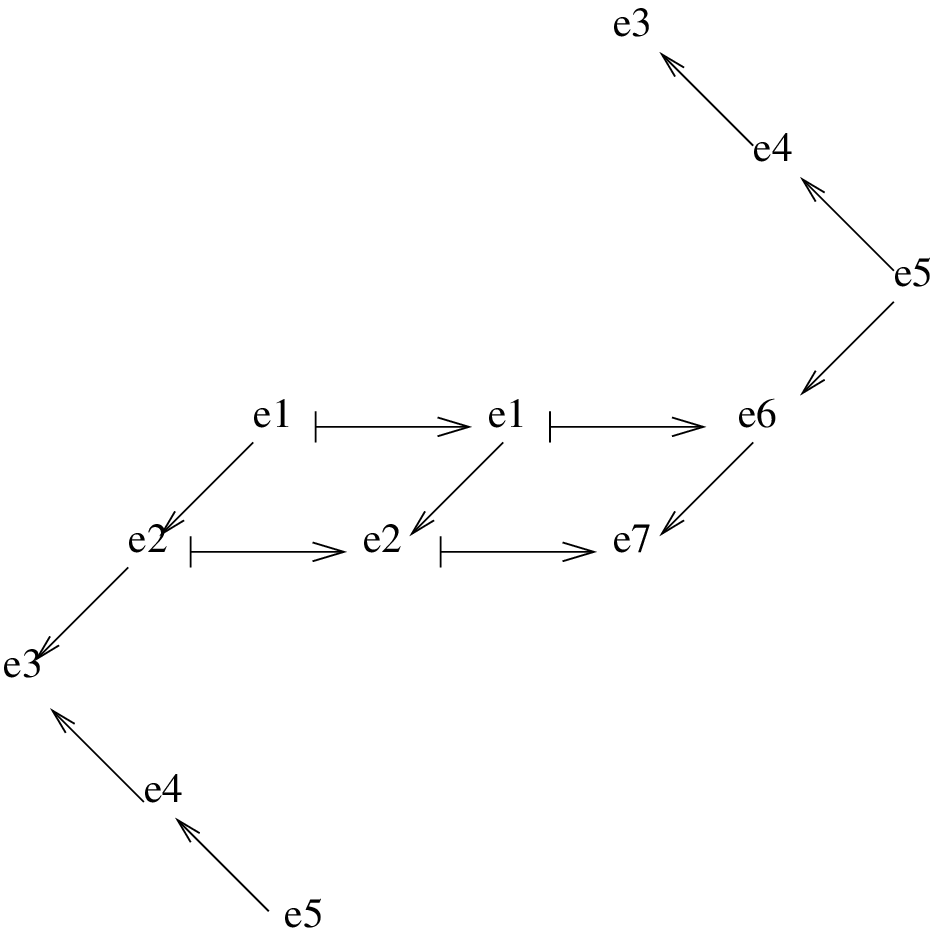}
\end{ex}

We have thus determined uniquely a functor $F:kZ\to R^0$. It is surjective on objectives and we have 
$$
\begin{array}{lcl}
F(r,s) & = & F(r,s+g) \\
F\pi(r,s) & = & F\pi(r,s+g) \\
F\rho(r,1) & = & F\rho(r,s+g) 
\end{array}
$$
for any vertex $(r,s)$. Analoguously to the construction for the post-projective objects, we define a quiver $Q^0:= Z/\sim$, 
where the equivalence relation $\sim$ is given by the fibres of $F$; 
we denote the equivalence class of $(r,s)$ by $(r,s)_0$, we write $\pi_0(r,s)$ and $\rho_0(r,s)$ for the equivalence 
classes of $\pi(r,s)$ and of $\rho(r,s)$ respectively. 

Let $Q_m^0$ be the full subquiver of $Q^0$ containing the vertices $(r,s)_0$ with 
$1\le s\le g$ and $0\le r\le gm+s$ (for 
$m\in \NN$). In the example, the shape of $Q_m^0$ for even $m$ is the following: 

\begin{center}
\psfragscanon
\psfrag{Q0}{$Q_m^0:$}
\psfrag{01}{\tiny $_{(0,1)}$}
\psfrag{02}{\tiny $_{(0,2)}$}
\psfrag{g03}{\tiny $_{(0,g)=(0,3)}$}
\psfrag{03g}{\tiny $_{(0,3)=(0,g)}$}
\psfrag{gmg}{\tiny $_{(gm,g)}$}
\psfrag{gm1g}{\tiny $_{(gm+1,g)}$}
\psfrag{gm2g}{\tiny $_{(gm+2,g)}$}
\psfrag{gmgg}{\tiny $_{(gm+g,g)}$}
\includegraphics[scale=.5]{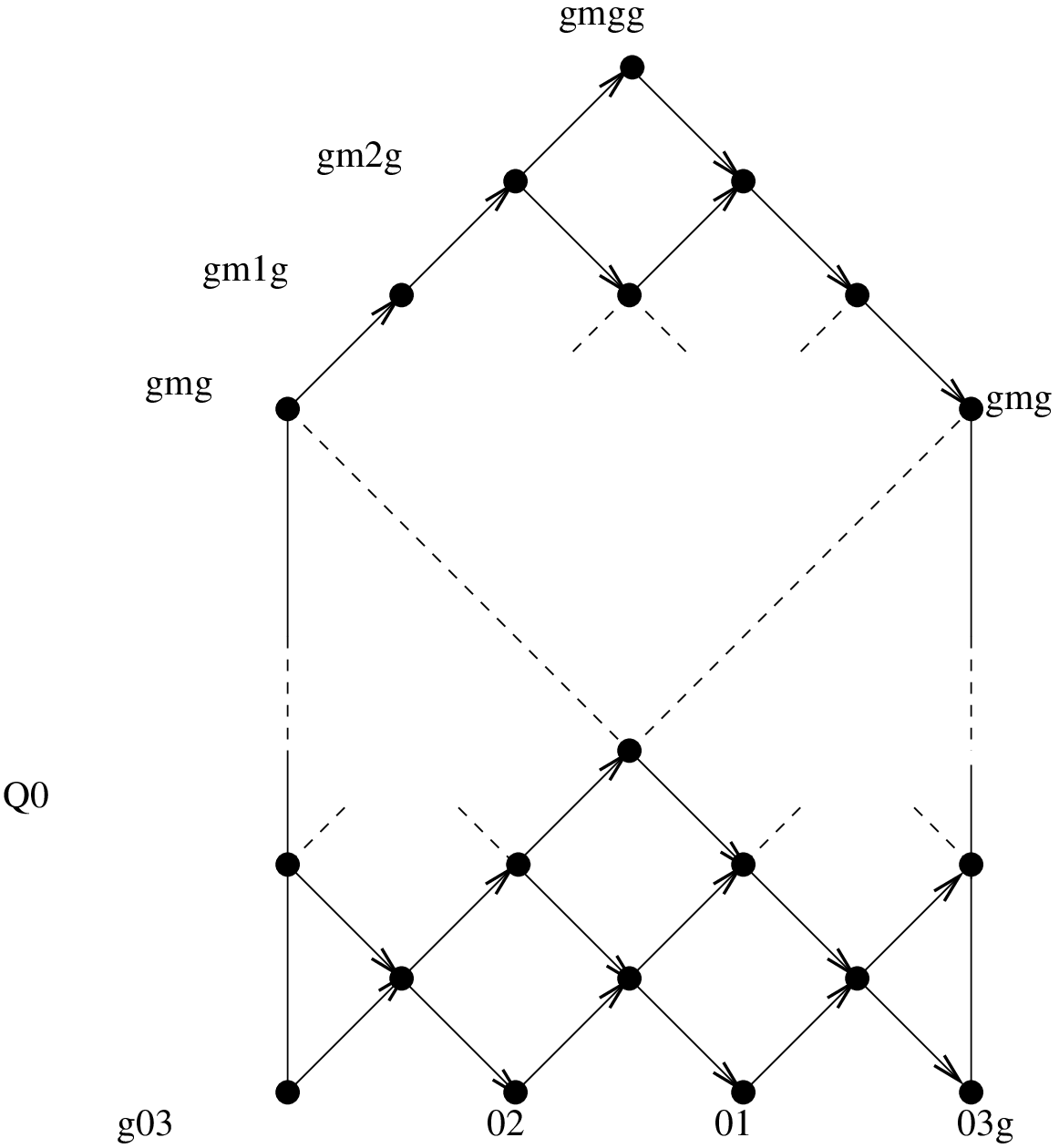}
\end{center}

As in Section~\ref{sec:postpr}, $F$ defines a functor $F_0:kQ_m^0\to R^0$; we thus define $R_m^0$ to be the category 
whose objects are $F_0X$, for all $X\in Q_m^0$. 

\begin{prop}
Let $\overline{kQ_m^0}$ be the $k$-category defined by $Q_m^0$, with the relations 
\begin{itemize}
\item 
$\pi'\rho=\rho'\pi$ for any four arrows $\pi$, $\pi'$, 
$\pi$, $\pi'$ of $Q_m^0$ in a diamond 
  $\xymatrix@R=+0.6pc @C=+0.1pc {
     & Y_1\ar[rd]^{\pi'} & \\
    X\ar[ru]^{\rho}\ar[rd]_{\pi} & & Z \\
     & Y_2\ar[ru]_{\rho'} &  
}$
\item 
$\pi_0(0,s)\rho_0(0,s')=0$ for all arrows 
  $\xymatrix@R=+0.5pc @C=+0.6pc {
     & (1,s') \ar[rd]^{\pi_0(0,s)} & \\
    (0,s')\ar[ru]^{\rho_0(0,s')}  & & (0,s)
}$
\end{itemize}
Then $F_0$ induces an isomorphism 
$\overline{F_0}:\overline{kQ_m^0}\to R_m^0$. 
\end{prop}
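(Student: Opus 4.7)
First I would verify that $F_0$ annihilates both families of relations so that it factors through $\overline{kQ_m^0}$. The diamond commutation relations are checked by direct inspection of the explicit formulas for $F\pi$ and $F\rho$: both $F\pi' \circ F\rho$ and $F\rho' \circ F\pi$ send a basis vector $e_t$ either to $e_t$ itself or (when a leg crosses the $p=1$ boundary) to $e_{t+n+1}$, and in every case the two compositions agree as linear maps. For the boundary relations $\pi_0(0,s)\rho_0(0,s')$, I would pick representatives in $Z$: the arrow $\rho(0,s')$ sits on the $p=1$ row so $F\rho(0,s')$ shifts a basis vector by $n+1$, pushing its index strictly above the upper end of the range of the target of $F\pi(0,s)$, which is then killed by $F\pi$. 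This is exactly analogous to the relation-checking in Section~\ref{sec:postpr}.

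Essential surjectivity is built into the definition of $R_m^0$; moreover the passage from $Z$ to $Q^0$ was made by quotienting by the fibres of $F$, so $F_0$ is bijective on objects. The statement therefore reduces to showing that $\overline{F_0}$ is full and faithful.

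Following the template of Section~\ref{sec:postpr}, I would put every path in $\overline{kQ_m^0}$ into normal form $\rho^a \pi^b$ (all $\rho$-arrows preceding all $\pi$-arrows) using the diamond relations. Two such normal forms with common endpoints become equal in $\overline{kQ_m^0}$ precisely when the corresponding ``parallelogram'' lies entirely in $Q_m^0$, that is, when it does not dip below the row $r=0$. The boundary zero relations encode exactly the fact that a path forced down to $r=0$ by repeated commutations becomes zero when no parallel path at a lower row exists. On the other side, a direct computation in $\rep K$ shows that $\Hom(V_{(p,q)}, V_{(p',q')})$ has a basis indexed by the ``common strands'' of the two string modules, each basis element sending $e_i$ to $e_{i + j(n+1)}$ for an appropriate $j$. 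I would then match the surviving normal-form classes bijectively with these strands, checking that $\overline{F_0}$ sends each class to the corresponding basis morphism.

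The main obstacle will be the last step, namely showing that the two listed families are \emph{sufficient}: every relation satisfied in $R_m^0$ must follow from the diamonds and the boundary zero relations alone. Concretely one must verify that the equivalence classes of surviving normal-form paths are mapped to linearly independent morphisms in $R_m^0$. The delicate point is concentrated near the row $r=0$, where the shifts by $n+1$ appearing in the formula for $F\rho$ interact with the boundary zero relations; one has to ensure that no spurious coincidence forces $\overline{F_0}$ to kill a class that the stated relations have not already identified with zero.
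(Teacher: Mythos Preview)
The paper states this proposition without proof: immediately after the statement it moves on to ``For $R^{\infty}$, the construction is similar to the one of $R^0$, we leave out the steps and give the result immediately.'' The same is true of the analogous propositions for $P_m$, $I_m$, $R_m^{\infty}$ and $R_m^{\lambda}$; all are asserted and none is argued in the text as presented. So there is no paper proof to compare your proposal against.

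Taken on its own, your outline is the standard and correct strategy for such a mesh-type presentation: factor through the relations, reduce paths to a normal form $\rho^a\pi^b$ via the diamond commutations, and then match the surviving classes with an explicit basis of $\Hom(V_{(p,q)},V_{(p',q')})$ coming from the string-module description. One point to tighten: your justification of the boundary relation $\pi_0(0,s)\rho_0(0,s')=0$ (``the arrow $\rho(0,s')$ sits on the $p=1$ row so $F\rho(0,s')$ shifts by $n+1$'') is not literally correct for all $s'$, since $p\equiv s'\ (\mathrm{mod}\ g)$ and the shift-by-$(n+1)$ branch of $F\rho$ occurs only when $p=1$. The vanishing at the mouth is better seen directly: $F(0,s')$ is the simple regular at position $s'$, and $F\pi(0,s)\circ F\rho(0,s')$ factors through the top of one simple regular and the socle of the adjacent one, which is zero. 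Apart from this local wording, your plan is sound; the genuine work, as you say, is the linear-independence count at the end.
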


For $R^{\infty}$, the construction is similar to the one of $R^0$, we leave out the steps and 
give the result immediately: 

Let $Q^{\infty}$ be the quiver with the vertices $(r,s)_{\infty}$, $r\in\mathbb{N}$, $s=0,\dots,h-1$, and 
arrows $\pi_{\infty}(r,s):(r+1,s-1)_{\infty}\to (r,s)_{\infty}$ for $s=1,\dots, h-1$ and 
$\pi_{\infty}(r,s):(r+1,h-1)_{\infty} \to (r,s)_{\infty}$ for $s=0$, as well as 
$\rho_{\infty}(r,s):(r,s)_{\infty}\to (r+1,s)_{\infty}$, for $r\in\mathbb{N}$. 
Let $Q_m^{\infty}$ be the full subquiver of $\mathbb{Q}^{\infty}$ 
on the vertices $(r,s)_{\infty}$, for $s=0,\dots, h-1$ and $r=0,\dots, h(m+1)-s$. 

In the running example, $Q_2^{\infty}$ looks as follows: 
\begin{center}
\psfragscanon
\psfrag{Q2-inf}{\tiny $Q_2^{\infty}$:}
\psfrag{00}{\tiny $_{(0,1)}$}
\psfrag{01}{\tiny $_{(0,2)}$}
\psfrag{10}{\tiny $_{(1,0)}$}
\psfrag{11}{\tiny $_{(1,1)}$}
\psfrag{21}{\tiny $_{(2,1)}$}
\psfrag{20}{\tiny $_{(2,0)}$}
\psfrag{40}{\tiny $_{(4,0)}$}
\psfrag{50}{\tiny $_{(5,0)}$}
\psfrag{60}{\tiny $_{(6,0)}$}
\psfrag{51}{\tiny $_{(5,1)}$}
\psfrag{r00}{\tiny $_{\rho(0,0)}$}
\psfrag{r01}{\tiny $_{\rho(0,1)}$}
\psfrag{p01}{\tiny $_{\pi(0,1)}$}
\psfrag{p00}{\tiny $_{\pi(0,0)}$}
\includegraphics[scale=.6]{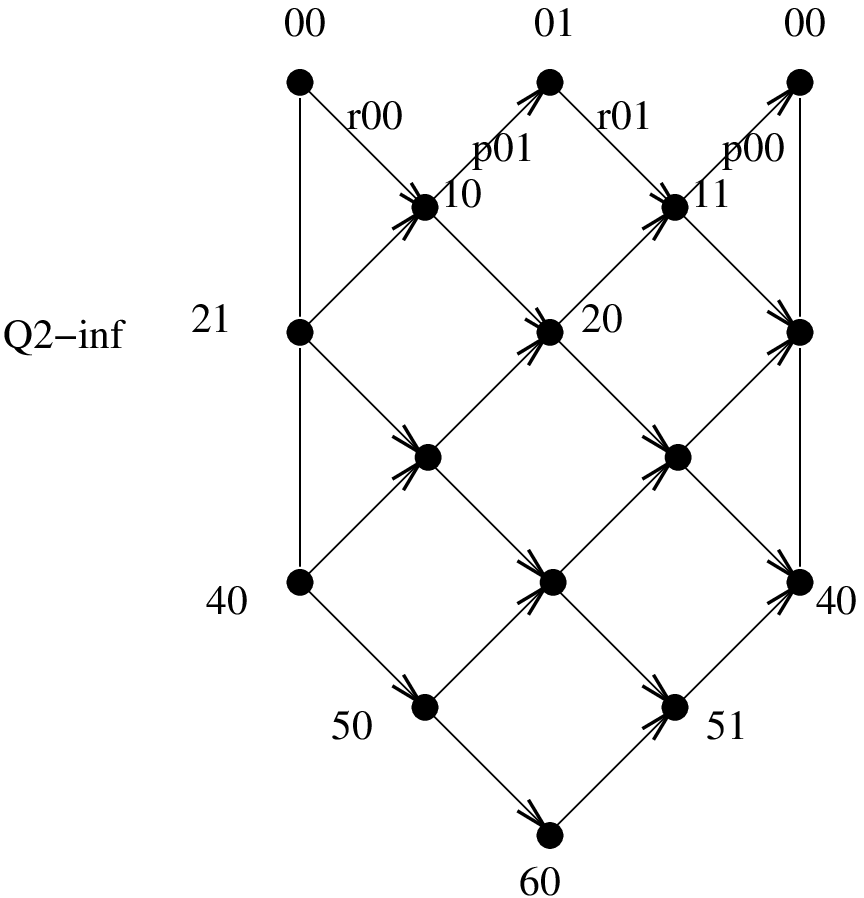}
\end{center}

We define a functor $F_{\infty}:kQ_m^{\infty}\to R^{\infty}$ as follows: Let $(r,s)_{\infty}\in Q_m^{\infty}$ and 
$$
\begin{array}{ll}
p_s = \left\{\begin{array}{ll} 0 & \mbox{if $s=0$} \\ g+s &  \mbox{if $s=1,\dots, h-1$}\end{array} \right.  & q_{r,s} = g + {\rm MOD}(r+s,h) + (n+1)\cdot{\rm DIV}(r+s,h)
\end{array}
$$
With this, we set 
$$
F_{\infty}(r,s)_{\infty}=V_{(p_s,q_{r,s})}\in R^{\infty}
$$
For any arrow $\rho_{\infty}(r,s):(r,s)_{\infty}\to (r+1,s)_{\infty}$ of $Q_m^{\infty}$ we define a morphism 
$F_{\infty}\rho_{\infty}(r,s):V_{(p_s,q_{r,s})}\to V_{(p_s,q_{r+1,s})}$ as follows: 
$$
F_{\infty}\rho_{\infty}(r,s)(e_t)=e_t\quad\mbox{for $t=p_s,\dots,q_{r,s}$} 
$$
For any arrow $\pi_{\infty}(r,s):(r+1,s-1)\to(r,s)$ of $Q_m^{\infty}$ we define a morphism 
$F_{\infty}\pi_{\infty}(r,s):V_{(p_{s-1},q_{r+1,s-1})}\to V_{(p_s,q_{r,s})}$ as follows: \\
If $s\in \{1,\dots, h-1\}$: 
$$
F_{\infty}\pi_{\infty}(r,s)(e_t)= \left\{  
                         \begin{array}{ll} 
                              0 & \mbox{ for } q_{s-1}\le t\le p_s  \\
                              e_t & \mbox{ for } t=p_q,\dots q_{r+s,s-1}                                
                          \end{array}   \right. 
$$
And if $s=0$ we define $F_{\infty}\pi_{\infty}(r,0):V_{(n,q_{r+1,h-1})}\to V_{(0,q_{r,0})}$ by setting 
$$
F_{\infty}\pi_{\infty}(r,0)(e_t)= \left\{  
                         \begin{array}{ll} 
                              0 & \mbox{ if } t=n\\
                              e_{t-(n+1)} & \mbox{ for } t=n+1,\dots,q_{r+1,h-1}. 
                          \end{array}   \right. 
$$

\begin{ex}
$$
F_{\infty}(1,1)_{\infty}\stackrel{\tiny F_{\infty}\rho(1,1)}{\longrightarrow} F_{\infty}(2,1)_{\infty} \stackrel{\tiny F_{\infty}\pi(1,0)}{\longrightarrow} F_{\infty}(1,0)_{\infty}: 
\psfragscanon
\psfrag{e0}{$_{e_0}$}
\psfrag{e1}{$_{e_1}$}
\psfrag{e2}{$_{e_2}$}
\psfrag{e3}{$_{e_3}$}
\psfrag{e4}{$_{e_4}$}
\psfrag{e5}{$_{e_5}$}
\psfrag{e6}{$_{e_6}$}
\psfrag{e7}{$_{e_7}$}
\psfrag{e8}{$_{e_8}$}
\psfrag{e9}{$_{e_9}$}
\includegraphics[scale=.6]{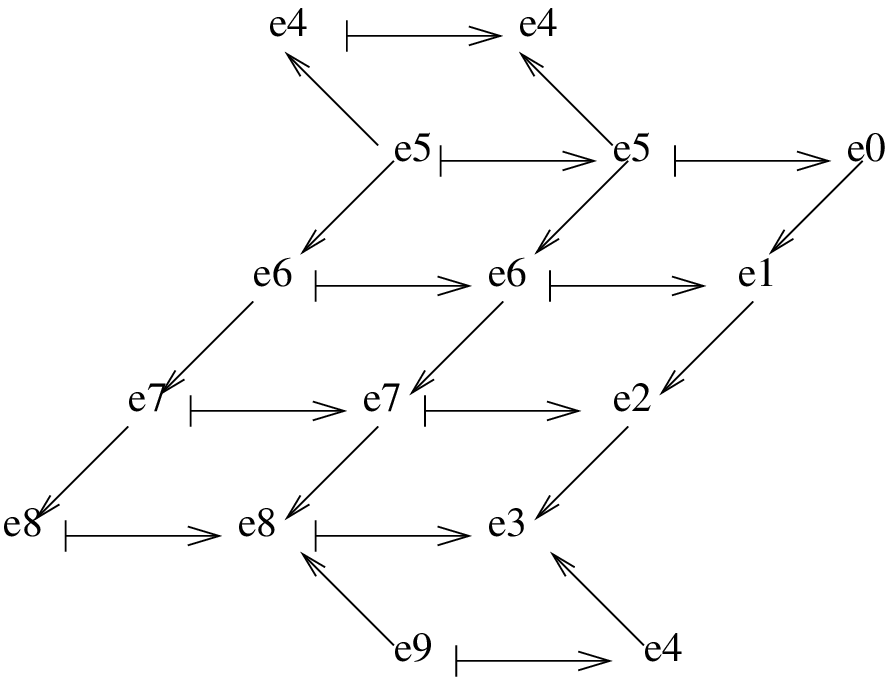}
$$
\end{ex}

\begin{prop}
Let $R_m^{\infty}$ be the category with the objects $F_{\infty}X$, for any $X\in Q_m^{\infty}$ and let $\overline{kQ_m^{\infty}}$ 
be the $k$-category defined by the quiver $Q_m^{\infty}$, subject to the relations $\pi'\rho=\rho'\pi$, for all arrows $\pi,\pi'$, $\rho, \rho'$ of 
$Q_m^{\infty}$ in a diamond 
$\xymatrix@R=+0.6pc @C=+0.1pc {
     & Y_1\ar[rd]^{\pi'} & \\
    X\ar[ru]^{\rho}\ar[rd]_{\pi} & & Z \\
     & Y_2\ar[ru]_{\rho'} &  
}$
as well as subject to the relations $\pi_{\infty}(0,s)\rho_{\infty}(0,s')=0$ for all arrows 
$\xymatrix@R=+0.5pc @C=+0.6pc {
    (0,s')\ar[rd]_{\rho_{\infty}(0,s')}  & & (0,s)\\ 
     & (1,s') \ar[ru]_{\pi_{\infty}(0,s)} & 
}$
of $Q_m^{\infty}$. Then $F_{\infty}$ induces an isomorphism 
$\overline{F_{\infty}}:\overline{kQ_m^{\infty}}\to R_m^{\infty}$. 
\end{prop}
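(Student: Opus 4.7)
The plan is to mirror the proof of the preceding proposition for $R_m^0$. Three things must be verified: that $F_\infty$ factors through the quotient by the diamond and mouth relations; that the induced map on objects is bijective; and that $\overline{F_\infty}$ is fully faithful.

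For the relations, for each diamond in $Q_m^\infty$ I would trace a basis vector $e_t$ through both compositions. The diamonds that sit entirely in the interior ($s \neq 0$) are immediate, since each $F_\infty\rho_\infty$ and $F_\infty\pi_\infty$ there either sends $e_t \mapsto e_t$ or kills it. The only subtle case is a diamond involving the wrap-around $\pi_\infty(r,0):(r+1,h-1)\to(r,0)$, which shifts indices by $-(n+1)$; both routes around the diamond apply that shift exactly once, so commutativity still holds. For the mouth relations $\pi_\infty(0,s)\rho_\infty(0,s')=0$, a basis vector $e_t$ pushed forward by $\rho_\infty(0,s')$ into $V_{(p_{s'},q_{1,s'})}$ is sent by $\pi_\infty(0,s)$ to zero, because $t$ lies precisely in the index range declared zero by the piecewise definition. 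Bijectivity on objects is a direct congruence check: the formulae for $(p_s, q_{r,s})$ are set up so that as $(r,s)_\infty$ ranges over $Q_m^\infty$, the pair $(p_s, q_{r,s})$ hits each indecomposable of $R_m^\infty$ exactly once.

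The heart of the proof is full faithfulness. The cleanest conceptual route is to invoke the duality $\overline{?}:\rep K \to \rep K$ from Section~\ref{sec:pre-inj}: the map $V_{(p,q)}\mapsto V_{(p',q')}$ of the proposition there exchanges the congruence conditions defining $R^0$ and $R^\infty$, so $\overline{?}$ restricts to an equivalence $R^0 \xrightarrow{\sim} R^\infty$, interchanging the roles of $g$ and $h$. One then identifies $Q_m^\infty$ with the quiver $Q_m^0$ built from the quiver $K^T$ with $g$ and $h$ swapped, matching diamond relations and mouth relations on the nose, and transports the already-established isomorphism $\overline{F_0}:\overline{kQ_m^0}\to R_m^0$ for this opposite quiver across the equivalence. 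Alternatively, one may repeat the $R_m^0$ argument directly, comparing hom-spaces decomposition-by-decomposition over vertices $(r,s)_\infty$. The main obstacle in either approach is the bookkeeping for paths that cross the wrap-around arrow: one must verify that the mouth relations truncate exactly those paths whose image would leave the surviving index range $[p_s, q_{r,s}]$, and no others. Once this case is disposed of, the remaining verification reduces to counting paths modulo diamond relations and matching them against the known at most one-dimensional hom-spaces in the tube $R^\infty$.
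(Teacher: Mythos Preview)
The paper does not actually prove this proposition: it introduces the $R^\infty$ material with the sentence ``For $R^{\infty}$, the construction is similar to the one of $R^0$, we leave out the steps and give the result immediately,'' states the definitions and the proposition, and moves on. So your plan to ``mirror the proof of the preceding proposition for $R_m^0$'' is exactly the paper's (implicit) argument, and the concrete checks you sketch---diamond relations via tracking basis vectors, the wrap-around case, the mouth relations, bijectivity on objects, and the hom-dimension count---are the right ingredients.

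There is, however, a genuine error in your ``cleanest conceptual route.'' The duality $\overline{?}$ of Section~\ref{sec:pre-inj} does \emph{not} exchange $R^0$ and $R^\infty$. Compute with the permutation $G$: for $V_{(p,q)}\in R^0$ one has $p\equiv 1,\dots,g$, hence $Gp=g-p\in\{0,\dots,g-1\}$, and $q\equiv 0,\dots,g-1$, hence $Gq=g-(q\bmod n{+}1)\in\{1,\dots,g\}$. Thus $p'\equiv Gq\in\{1,\dots,g\}$ and $q'\equiv Gp\in\{0,\dots,g-1\}$, so $V_{(p',q')}$ is again in $R^0$; the same check shows $R^\infty$ is also preserved. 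The functor $\overline{?}$ is a self-duality on each tube (it is what underlies the relation $S(r,s)_0=(r,s_0)_0$ and $S(r,s)_\infty=(r,s_\infty)_\infty$ defined later for the main theorem), not an equivalence between the two tubes. What is true is that $R^\infty$ for $K$ looks like $R^0$ for a quiver with the roles of $g$ and $h$ swapped, but that is a separate, purely combinatorial symmetry---not the contravariant $\overline{?}$---and invoking it still requires redoing the $R^0$ argument for that other quiver. So drop route (a) and keep your direct argument (b); that is both correct and what the paper intends.
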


Finally, we consider the $k$-category $R_m^{\lambda}$ for every $\lambda\in k\setminus\{0\}$ whose objects are the 
$V_d^{\lambda}$, $1\le d\le m+2$ (with $m\in \NN$). 

For this, we let $Q_m^{\lambda}$ be the quiver with vertices $(\lambda,1),\dots,(\lambda,m+2)$ and with arrows 
$\pi_{\lambda}(r):(\lambda,r+1)\to(\lambda,r)$, $\rho_{\lambda}:(\lambda,r)\to(\lambda,r+1)$ for $r=1,\dots,m+1$: 


$$
\psfragscanon
\psfrag{l1}{\tiny $_{(\lambda,1)}$}
\psfrag{l2}{\tiny $_{(\lambda,2)}$}
\psfrag{lm1}{\tiny $_{(\lambda,m+1)}$}
\psfrag{lm2}{\tiny $_{(\lambda,m+2)}$}
\psfrag{Qml}{$Q_m^{\lambda}$:}
\psfrag{r1}{\tiny $\rho_{\lambda}(1)$} 
\psfrag{rm1}{\tiny $\rho_{\lambda}(m+1)$} 
\psfrag{p1}{\tiny $\pi_{\lambda}(1)$} 
\psfrag{pm1}{\tiny $\pi_{\lambda}(m+1)$} 
\includegraphics[scale=.6]{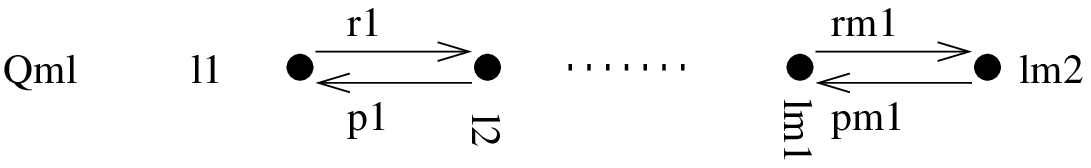}
$$

\vskip 5pt

We define a functor $F_{\lambda}:kQ_,^{\lambda}\to R_m^{\lambda}$ by setting 
$$
F_{\lambda}(\lambda,r)=V_r^{\lambda}\quad r=1,\dots,m+2
$$
with morphisms (for $r=1,\dots, m+1$): 
$$
\begin{array}{ll}
F_{\lambda}\pi_{\lambda}(r) & \mbox{given by }\begin{pmatrix} {\bf 1}_r & 0\end{pmatrix}\in k^{r\times r+1} \mbox{ in every vertex of $K$}\\
F_{\lambda}\rho_{\lambda}(r) & \mbox{given by }\begin{pmatrix} 0 \\ {\bf 1}_r \end{pmatrix}\in k^{r+1\times r} \mbox{in every vertex of $K$} 
\end{array}
$$

\begin{prop}
$F_{\lambda}$ induces an isomorphism between the $k$-category defined by $Q_m^{\lambda}$, subject to the relations 
\begin{enumerate}
\item
$\pi_{\lambda}(r)\rho_{\lambda}(r)= \rho_{\lambda}(r+1)\pi_{\lambda}(r+1)$, $r=1,\dots,m$ 
\item
$\pi_{\lambda}(1)\rho_{\lambda}(1)=0$. 
\end{enumerate}
and the $k$-category $R_m^{\lambda}$. 
\end{prop}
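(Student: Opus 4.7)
The plan is to check that $F_\lambda$ annihilates both relations, giving the induced functor $\overline{F_\lambda}$, and then to verify that $\overline{F_\lambda}$ is full and faithful by producing matching bases of the $\Hom$-spaces on each side. Bijectivity on objects is built into the definition of $R_m^\lambda$.

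First I would record the matrix identities
$$F_\lambda(\pi_\lambda(r))\,F_\lambda(\rho_\lambda(r))=\begin{pmatrix}1_r & 0\end{pmatrix}\begin{pmatrix}0\\ 1_r\end{pmatrix}=J_r,\quad F_\lambda(\rho_\lambda(r-1))\,F_\lambda(\pi_\lambda(r-1))=\begin{pmatrix}0\\ 1_{r-1}\end{pmatrix}\begin{pmatrix}1_{r-1} & 0\end{pmatrix}=J_r$$
in $\operatorname{End}(V_r^\lambda)$, noting that $J_r$ is a genuine morphism since it commutes with $\lambda\id_r+J_r$ at $\beta_{g-1}$ and with $\id_r$ at every other vertex. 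Thus the two loops at $(\lambda,r)$ identified by relation (1) map to the same element, and the direct computation $(1\ 0)\binom{0}{1}=0\in\operatorname{End}(V_1^\lambda)$ handles relation (2). This yields the induced functor $\overline{F_\lambda}:\overline{kQ_m^\lambda}\to R_m^\lambda$.

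Next I would write $T_r\in\operatorname{End}((\lambda,r))$ for this common loop (with $T_1=0$) and derive the sliding identities
$$T_{r+1}\,\rho_\lambda(r)=\rho_\lambda(r)\,T_r,\qquad \pi_\lambda(r)\,T_{r+1}=T_r\,\pi_\lambda(r)$$
from the two descriptions of $T_{r+1}$. An induction on $r$ then gives $T_r^r=0$, since
$$T_r^r=\rho_\lambda(r-1)\,\pi_\lambda(r-1)\,T_r^{r-1}=\rho_\lambda(r-1)\,T_{r-1}^{r-1}\,\pi_\lambda(r-1)=0$$
by the inductive hypothesis. An induction on path length then shows that every path $(\lambda,r)\to(\lambda,s)$ can be brought, modulo relation (1), into the normal form $\rho^{s,r}\circ T_r^k$ (when $s\ge r$) or $\pi^{s,r}\circ T_r^k$ (when $s\le r$), where $\rho^{s,r}$ and $\pi^{s,r}$ denote the monotone compositions of $\rho$'s, respectively $\pi$'s: pick a peak of the trajectory, recognize the corresponding three-step subpath as the loop $T_t$ at its base height, and use sliding to push it through the monotone part. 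Combined with $T_r^r=0$, this yields the bound $\dim\Hom_{\overline{kQ_m^\lambda}}((\lambda,r),(\lambda,s))\le\min(r,s)$.

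On the other side, any morphism $\mu:V_r^\lambda\to V_s^\lambda$ must have the same underlying matrix $A$ at every vertex of $K$ (since $K$ is connected and all structure maps outside $\beta_{g-1}$ are identities), and the constraint at $\beta_{g-1}$ reduces to $J_sA=AJ_r$. The intertwiner space has dimension $\min(r,s)$, and the matrices $F_\lambda(\rho^{s,r}T_r^k)=\binom{0}{J_r^k}$ for $k=0,\dots,r-1$ (when $s\ge r$), respectively $F_\lambda(\pi^{s,r}T_r^k)=(1_s\ 0)J_r^k$ for $k=0,\dots,s-1$ (when $s\le r$), are manifestly linearly independent and therefore form a basis. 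This matches the upper bound from the preceding step, so $\overline{F_\lambda}$ is a bijection on every $\Hom$-space. The main obstacle will be the induction on path length that establishes the normal form together with $T_r^r=0$; once these are in place, the comparison of bases is a routine calculation.
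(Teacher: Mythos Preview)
The paper states this proposition without proof, so there is no argument to compare against. Your proposal is correct and follows the natural route: check that $F_\lambda$ kills the relations, put paths into a normal form via the sliding identities and the nilpotency $T_r^r=0$, and then match the resulting upper bound $\min(r,s)$ against the dimension of the intertwiner space $\{A\in k^{s\times r}:J_sA=AJ_r\}$, which your explicit images hit. Nothing is missing.

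One remark on reading the statement: you have (silently) corrected a typo. As printed, relation~(1) equates $\pi_\lambda(r)\rho_\lambda(r)\in\operatorname{End}((\lambda,r))$ with $\rho_\lambda(r+1)\pi_\lambda(r+1)\in\operatorname{End}((\lambda,r+2))$, which is ill-typed. Your interpretation $\pi_\lambda(r)\rho_\lambda(r)=\rho_\lambda(r-1)\pi_\lambda(r-1)$ is the intended one, as confirmed by the Remark immediately following the proposition (the diamond with $X=Z$).
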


\begin{rem}
Condition (1) in the proposition above can also be written in the form 
$$
\pi'\rho=\rho'\pi,\ \mbox{ for all arrows of $Q_m^{\lambda}$ of the form }   
   \xymatrix@R=+0.6pc @C=+0.1pc {
     & Y_1\ar[rd]^{\pi'} & \\
    X\ar[ru]^{\rho}\ar[rd]_{\pi} & & Z \\
     & Y_2\ar[ru]_{\rho'} &  
}
$$
since $X=Z$ is allowed. 
\end{rem}

%
\section{Main Theorem}
%

For $m\in \NN$ let $Q_m$ be the quiver whose vertices are the vertices of the quivers 
$Q_m^P$, $Q_m^I$ and $Q_{2m(n+1)}^{\sigma}$, for $\sigma\in k\cup\{\infty\}$, the arrows 
are all the arrows of these three quivers, with in additional the ``connecting'' arrows 
$\iota_0(x)$, $\kappa_0(x)$ for $x=0,\dots,g$, $\iota_{\infty}(y)$, $\kappa_{\infty}(y)$ for 
$y=0,g,g+1,\dots, n$ and $\iota_{\lambda}$, $\kappa_{\lambda}$ for $\lambda\in k\setminus\{0\}$. 
Their definition is apparent from the figure below of $Q_m$ below. (Remark by KB: 
The example $g=h=1$ appears in Examples 5 and 6 in Chapter 8 of~\cite{gr}) 

\begin{figure}[ht]
\begin{center}
\psfragscanon
\psfrag{QI}{$Q_m^I$}
\psfrag{QP}{$Q_m^P$}
\psfrag{Q0}{$Q_{2m(n+1)}^{0}$}
\psfrag{QL}{$Q_{2m(n+1)}^{\lambda}$}
\psfrag{Q-inf}{$Q_{2m(n+1)}^{\infty}$}
\psfrag{i00}{\tiny $_{\iota_0(0)}$}
\psfrag{i01}{\tiny $_{\iota_0(1)}$}
\psfrag{i02}{\tiny $_{\iota_0(2)}$}
\psfrag{i03}{\tiny $_{\iota_0(3)}$}
\psfrag{k00}{\tiny $_{\kappa_0(0)}$}
\psfrag{k01}{\tiny $_{\kappa_0(1)}$}
\psfrag{k02}{\tiny $_{\kappa_0(2)}$}
\psfrag{k03}{\tiny $_{\kappa_0(3)}$}
\psfrag{ki0}{\tiny $_{\kappa_{\infty}(0)}$}
\psfrag{ki3}{\tiny $_{\kappa_{\infty}(3)}$}
\psfrag{ki4}{\tiny $_{\kappa_{\infty}(4)}$}
\psfrag{ii0}{\tiny $_{\iota_{\infty}(0)}$}
\psfrag{ii3}{\tiny $_{\iota_{\infty}(3)}$}
\psfrag{ii4}{\tiny $_{\iota_{\infty}(4)}$}
\psfrag{00}{\tiny $_{(0,0)}$}
\psfrag{01}{\tiny $_{(0,1)}$}
\psfrag{02}{\tiny $_{(0,2)}$}
\psfrag{03}{\tiny $_{(0,3)}$}
\psfrag{04}{\tiny $_{(0,4)}$}
\psfrag{ri}{\tiny $_{\rho_{\infty}}$}
\psfrag{pi}{\tiny $_{\pi_{\infty}}$}
\psfrag{r0}{\tiny $_{\rho_{0}}$}
\psfrag{p0}{\tiny $_{\pi_{0}}$}
\psfrag{il}{\tiny $_{\iota_{\lambda}}$}
\psfrag{kl}{\tiny $_{\kappa_{\lambda}}$}
\psfrag{pl1}{\tiny $_{\pi_{\lambda}(1)}$}
\psfrag{rl1}{\tiny $_{\rho_{\lambda}(1)}$}
\psfrag{20m0}{\tiny $_{(20m,0)}$}
\psfrag{6m3}{\tiny $_{(6m,3)}$}
\psfrag{6m0}{\tiny $_{(6m,0)}$}
\psfrag{30m30}{\tiny $_{(30m,3)_0}$}
\psfrag{30m3I}{\tiny $_{(30m,3)_I}$}
\psfrag{6m3I}{\tiny $_{(6m,3)_I}$}
\includegraphics[scale=.5]{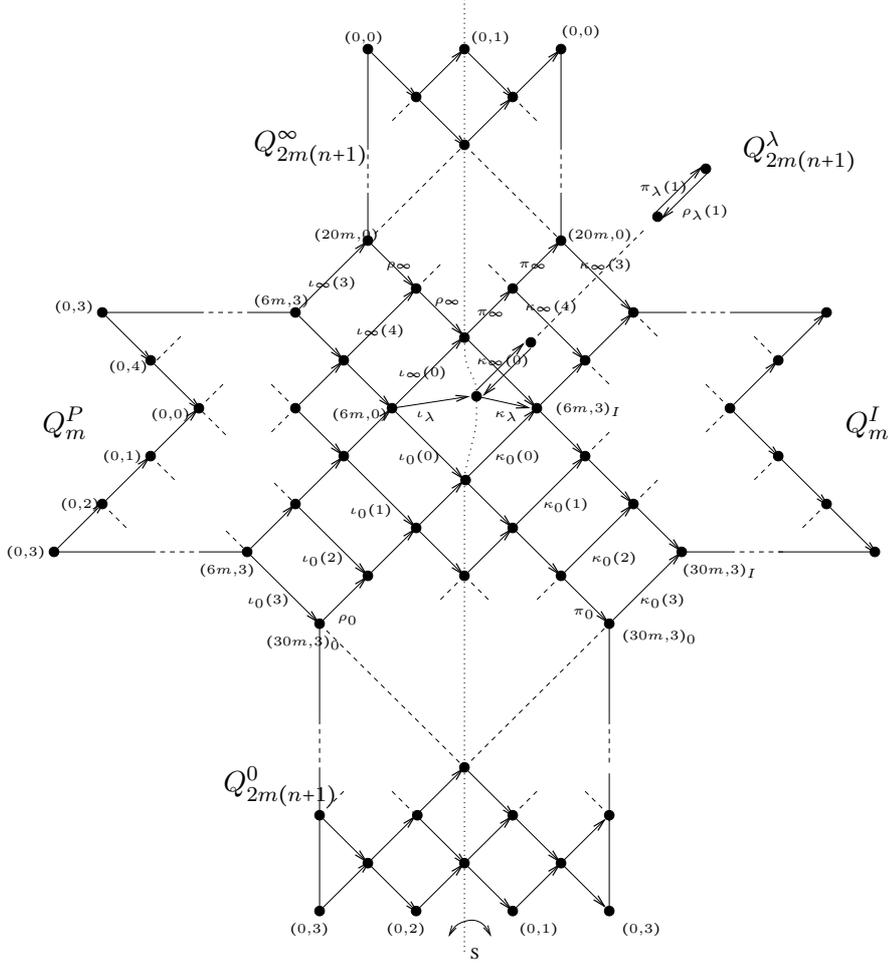}
\caption{$Q_m$ for running example: $g=3$, $n=4$ ($ghm=6m$, $g2m(n+1)=30m$, $h2m(n+1)=20m$)}
\label{fig:quiver-Qm}
\end{center}
\end{figure}

By $\mathcal{J}_mK$ we denote the full subcategory of $\rep K$ whose objects are the union of the objects of 
$P_m$, $I_m$, $R_{2m(n+1}^{\sigma}$, $\sigma\in k\cup\{\infty\}$ (for $m\in \NN$).

\begin{rem}
To any $V\in\rep K$ there exists $m\in\NN$ such that $V$ is isomorphic to a direct sum of representations 
from $\mathcal{J}_mK$. 
\end{rem}

Our goal is to find an isomorphism $\Phi_m$ (for any $m\in\NN$) between the $k$-category defined by $Q_m$ and certain relations 
and the $k$-category $\mathcal{J}_mK$. 
For this, we first define a functor $\Phi_m:kQ_m\to \mathcal{J}_mK$: 

Let $X$ be a vertex of $Q_m$. We set 
$$
\Phi_mX := \left\{ \begin{array}{ll} F_PX & \mbox{if $X\in Q_m^P$} \\
                                                F_IX & \mbox{if $X\in Q_m^I$} \\
                                                F_{\sigma} X & \mbox{if $X\in Q_{2m(n+1)}^{\sigma}$},\quad \sigma\in k\cup\{\infty\}
                   \end{array} \right.
$$

Analoguosly, if $\gamma$ is an arrow of $Q_m$, we set 
$$
\Phi_m\gamma := \left\{ \begin{array}{ll} F_P\gamma & \mbox{if $\gamma$ is an arrow of  $Q_m^P$} \\
                                                F_I\gamma & \mbox{if $\gamma$ is an arrow of $Q_m^I$} \\
                                                F_{\sigma} \gamma & \mbox{if $\gamma$ is in $Q_{2m(n+1)}^{\sigma}$},\quad \sigma\in k\cup\{\infty\}
                   \end{array} \right.
$$
We also need to define $\Phi_m$ on the connecting arrows. 
\begin{itemize}
\item
For any arrow $\iota_0(x):X\to Y$ in $Q_m$, $x=0,\dots,g$, define the morphism $\Phi_m\iota_0(x):V_{(p,q)}\to V_{(p',q')}$ (where 
$V_{(p,q)}=\Phi_mX$ and $V_{(p',q')}=\Phi_m Y$) by setting 
$$
\Phi_m\iota_0(x)(e_t)=e_t,\quad t=p,\dots,q
$$
\item
For any arrow $\iota_{\infty}(x):X\to Y$ in $Q_m$, $x=0,g, \dots,n$ we define a morphism $\Phi_m\iota_{\infty}(x):\Phi_mX=V_{(p,q)}\to V_{p',q')}=\Phi_mY$ 
by 
$$
\Phi_m\iota_{\infty}(x)(e_{q-t})=e_{q'-t}\ \mbox{for } t=0,\dots, q-p.
$$ 
\item 
For $\lambda\in k\setminus\{0\}$ we define a morphism $\Phi_m\iota_{\lambda}:\Phi_m(ghm,0)_p=V_w\to \Phi_m(\lambda,2m(n+1)+2)$ by 
giving the matrices w.r.t. the basis $e_g,\dots,e_{(n+1)(m(n+1)+1)+g}$ of $E_w$ and the canonical basis of the spaces $k^{2m(n+1)+2}$. 
For this, 
we let  (for $d\in \NN$) $D_d{\lambda}\in k^{2m(n+1)+2\times d}$ be the matrix 
$$
D_d(\lambda)_{ij}:=\left\{\begin{array}{ll} 0 & \mbox{for $i>j$}, \\ {{j-1}\choose{i-1} }\lambda^{j-i}& \mbox{else}\end{array}\right.  \quad
D_d(\lambda): \begin{pmatrix} 1 & \lambda & \lambda^2 & \cdots & \lambda^{d-1} \\ & 1 & 2\lambda & & \vdots \\  & & 1 & & \vdots \\  & & & \ddots  \\ 
& & & & 1 \\  \phantom{} & \\ & \phantom{} \end{pmatrix}
$$
Taking into account the dimensions of the vector spaces $\Phi_m(ghm,0)_P(x)$ we define $\Phi_m\iota_{\lambda}(x)$ as 
$$
\left\{
\begin{array}{ll} D_{m(n+1)+2}(\lambda) & \mbox{if $x=g$} \\ D_{m(n+1)+1}(\lambda) & x=0,\dots,g-1,g+1,\dots,n \end{array}\right. 
$$
Then $\Phi_m\iota_{\lambda}$ is a morphism. 
\end{itemize}
To define $\Phi_m$ on the remaining arrows of $Q_m$, we use the construction from Section~\ref{sec:pre-inj}. Let $S$ be the permutation of the 
vertices of $Q_m$ that corresponds to a reflection along the vertical (dashed) line in Figure~\ref{fig:quiver-Qm}. 
For every vertex $X$ of $Q_m^P$ or of $Q_m^I$ we use the definition of Section~\ref{sec:pre-inj}; 
for $(r,s)_0\in Q_{2m(n+1)}^0$ let $s_0\in\{1,\dots,g\}$ be such that $s_0\equiv r-s$ $\mod g$. Then we set $S(r,s)_0=(r,s_0)_0$. 
For $(r,s)_{\infty}\in Q_{2m(n+1)}^{\infty}$ let $s_{\infty}\in \{0,\dots, h-1\}$ be such that $s_{\infty}=-r-s$ $\mod h$; 
then set $S(r,1)_{\infty}=(r,s_{\infty})_{\infty}$; 
finally, we define $S(\lambda,r)=(\lambda,r)$ for all $(\lambda,r)\in Q_{2m(n+1)}^{\lambda}$, $\lambda\in k\setminus\{0\}$. 

For every arrow $\gamma:X\to Y$ of $Q_m$ let $S\gamma$ be the arrow $S\gamma:SY\to SY$ of $Q_m$; then we have 
$$
\begin{array}{l}
S\iota_0(x)=\kappa_0(x)\quad\mbox{for $x=0,\dots,g$}, \\
S\iota_{\infty}(y)=\kappa_{\infty}(y) \quad\mbox{for $y=0,g,\dots,n$}\\
S\iota_{\lambda}=\kappa_{\lambda}\quad\mbox{for $\lambda\in k\setminus\{0\}$}. 
\end{array}
$$

\begin{prop}
For every vertex $X$ of $Q_m$ there is an isomorphism 
$$
\varphi_X:\overline{\Phi_mX}\stackrel{\sim}{\longrightarrow}\Phi_mSX
$$
\end{prop}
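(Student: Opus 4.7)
I proceed by case analysis on the subquiver of $Q_m$ containing $X$. If $X \in Q_m^P$, then $SX \in Q_m^I$ and applying the defining formula $F_I Y = \varphi_{SY}(\overline{F_P SY})$ to $Y = SX$ (using $S^2 = \id$) gives
$$
\Phi_m(SX) \;=\; F_I(SX) \;=\; \varphi_X(\overline{F_P X}) \;=\; \varphi_X(\overline{\Phi_m X}),
$$
so the isomorphism provided by the Proposition of Section~\ref{sec:pre-inj} already serves as $\varphi_X$. If $X \in Q_m^I$, then by definition $\Phi_m X \cong \overline{\Phi_m SX}$, and applying $\overline{?}$ yields $\overline{\Phi_m X} \cong \overline{\overline{\Phi_m SX}} \cong \Phi_m SX$; the last step uses that the vector-space duality $?^T$ squares to the identity up to canonical isomorphism and that the permutation $G$ of $\{0,\dots,n\}$ is an involution, so that $\overline{\overline{?}}$ is naturally isomorphic to the identity on $\rep K$.

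For $X \in Q_{2m(n+1)}^0$ (respectively $X \in Q_{2m(n+1)}^\infty$), write $\Phi_m X = V_{(p,q)}$. The Proposition of Section~\ref{sec:pre-inj} produces an isomorphism $\overline{V_{(p,q)}} \stackrel{\sim}{\longrightarrow} V_{(p',q')}$ with $p' \equiv Gq$, $q' \equiv Gp \mod n+1$ and $q'-p' = q-p$; one checks immediately from the $R^0$ ranges $p \in \{1,\dots,g\}$, $q \in \{0,\dots,g-1\} \mod n+1$ (and the analogous $R^\infty$ ranges) that the resulting pair again lies in the same regular class, so $V_{(p',q')}$ really is an object of the relevant $R^\sigma$. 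It then remains to verify that $V_{(p',q')} \cong \Phi_m(SX)$; unraveling the explicit formula $F_0(r,s) = V_{(p,q'+t(n+1))}$ (and its $F_\infty$ counterpart) together with the congruences $s_0 \equiv r-s \mod g$ and $s_\infty \equiv -r-s \mod h$ defining $S$ on these subquivers, this reduces to a MOD/DIV calculation.

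Finally, for $X = (\lambda, r) \in Q_{2m(n+1)}^\lambda$ with $\lambda \ne 0$ we have $SX = X$ and $\Phi_m X = V_r^\lambda$, so the required isomorphism $\overline{V_r^\lambda} \stackrel{\sim}{\longrightarrow} V_r^\lambda$ must be built by hand. Using $G\beta_{g-1} = \beta_0$ one computes that $\overline{V_r^\lambda}$ carries the identity on every arrow of $K$ except $\beta_0$, on which it carries $\lambda\id_r + J_r^T$. Letting $P_r$ denote the anti-diagonal permutation matrix (which satisfies $J_r P_r = P_r J_r^T$), I set $\varphi_X(x) := P_r$ for $x = 1, \dots, g-1$ and $\varphi_X(x) := (\lambda\id_r + J_r)P_r$ for $x \in \{0, g, g+1, \dots, n\}$; invertibility of $\lambda\id_r + J_r$ (which uses $\lambda \ne 0$) makes each $\varphi_X(x)$ an isomorphism, and the compatibility at every arrow reduces either to a triviality or to the identity $J_r P_r = P_r J_r^T$ (with the case $g = 1$ handled by the simpler choice $\varphi_X(x) = P_r$ for all $x$). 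The main obstacle is the arithmetic in the $R^0$ and $R^\infty$ cases of the second paragraph, where the MOD/DIV formulas must be aligned with $G$ and the definition of $S$ on the nose in order to confirm that the duality proposition delivers precisely the indices computed by $F_0$ or $F_\infty$.
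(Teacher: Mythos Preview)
Your proposal is correct and follows essentially the same strategy as the paper: invoke the duality isomorphism $\overline{V_{(p,q)}}\cong V_{(p',q')}$ from Section~\ref{sec:pre-inj} for the cases $X\in Q_m^P$, $Q_{2m(n+1)}^0$, $Q_{2m(n+1)}^\infty$, reduce $X\in Q_m^I$ to $\overline{\overline{V}}\cong V$, and write down an explicit matrix isomorphism for $X=(\lambda,r)$. The only visible differences are cosmetic: the paper treats $Q_m^P$ together with the regular tubes rather than via the defining formula for $F_I$, it simply asserts $\Phi_m SX=V_{(p',q')}$ where you (more honestly) flag the MOD/DIV check, and for the homogeneous tube it records the inverse isomorphism $\varphi_X^{-1}$ via $\theta_r$ and $\theta_r(\lambda\id_r+J_r)$ on the complementary vertex sets, which amounts to a different but equally valid choice of $\varphi_X$ from yours.
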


\begin{proof}
First let $X$ be a vertex of one of the quivers $Q_m^P$, $Q_{2m(n+1)}^0$, $Q_{2m(n+1)}^{\infty}$ 
and let $\Phi_mX=V_{(p,q)}$. 

By Section~\ref{sec:pre-inj} we know that there is an isomorphism $\varphi=\varphi_X:\overline{V_{(p,q)}}\stackrel{\sim}{\longrightarrow} V_{(p',q')}$ 
given by $\varphi_X(\check{e}_{p+t})=e_{q'-t}$, for $t=0,\dots,q-p$, for integers $p',q'$ with $p'\equiv Gq$ 
$\mod n+1$, $q'\equiv Gp$ $\mod n+1$ and 
$q'-p'=q-p$. \\
We have $\Phi_m SX=V_{(p',q')}$. 

\vskip 5pt 

For $X\in Q_m^I$, the claim follows from the fact that for any $V\in\rep K$ there is an isomorphism between $\overline{\overline{V}}$ and $V$. 

\vskip 5pt 

Now let $X=(\lambda,r)\in Q_{2m(n+1)}^{\lambda}$, $\lambda\in k\setminus\{0\}$. We give an isomorphism 
$\varphi_X^{-1}:V_r^{\lambda}\stackrel{\sim}{\longrightarrow} \overline{V_r^{\lambda}}$, we then let $\varphi_X$ be the inverse morphism of this. 

We describe $\varphi_X^{-1}$ using the matrices representing it with respect to the canonical bases $e_1,\dots,e_r$ of $k^r$ and $\check{e}_1,\dots,\check{e}_r$ 
of $(k^r)^{\perp}$: 
Using $\theta_r:=\begin{pmatrix} 0 & & 1 \\  & \udots \\ 1 & & 0 \end{pmatrix}\in k^{r\times r}$ let $\varphi_X^{-1}$ be given by 
$$
\left\{
\begin{array}{ll}
\theta_r & \mbox{if } x=0,g,g+1,\dots, n \\ 
\lambda\theta_r + \theta_r J_r & \mbox{ if } x=1,\dots,g-1 \quad \quad \mbox{ where }J_r=\begin{pmatrix} 0 & & & 0 \\ 1 & \ddots & \\  & \ddots & \ddots \\ 0 & &1 & 0  \end{pmatrix}
\in k^{r\times r}
\end{array}
\right.
$$
This proves the claim. 

For any connecting arrow $\gamma:X\to Y$ of $Q_m$ with $X\in Q_m^P$ and $Y\in Q_{2m(n+1)}^{\sigma}$, $\sigma\in k\cup\{\infty\}$, we define 
$$
\Phi_mS\gamma:\Phi_mSY\to \Phi_mSX \quad \mbox{as follows }\quad  \Phi_mS\gamma:=\varphi_X\circ\overline{\Phi_m\gamma}\circ\varphi_Y^{-1} 
$$
Hence we have associated a morphism to any arrow of $Q_m$ and thus defined the functor $\Phi_m:kQ_m\to  \mathcal{J}_mK$ uniquely. 
\end{proof}

To keep the description of the relations in the following statement simple, we assume $n\ge 2$ and introduce some notation to allow us 
to abbreviate the relations: 

\begin{center}
\psfragscanon
\psfrag{QI}{$Q_m^I$}
\psfrag{QP}{$Q_m^P$}
\psfrag{Q0}{$Q_{2m(n+1)}^{0}$}
\psfrag{Q-inf}{$Q_{2m(n+1)}^{\infty}$}
\psfrag{ghm0P}{\tiny $_{(ghm,0)_P}$}
\psfrag{ghm0I}{\tiny $_{(ghm,0)_I}$}
\psfrag{i00}{\tiny $_{\iota_0(0)}$}
\psfrag{k00}{\tiny $_{\kappa_0(0)}$}
\psfrag{ki0}{\tiny $_{\kappa_{\infty}(0)}$}
\psfrag{ii0}{\tiny $_{\iota_{\infty}(0)}$}
\psfrag{ghma}{\tiny $(ghm,\alpha_{g-1})$}
\psfrag{ghmgP}{\tiny $(ghm,g)_P$}
\psfrag{h2m0}{\tiny $(2hm(n+1),0)$}
\psfrag{ghmgI}{\tiny $(ghm,g)_I$}
\psfrag{g2mg0}{\tiny $(g2m(n+1),g)_0$}
\psfrag{agP}{\tiny $\alpha_P^g$}
\psfrag{bhP}{\tiny $\beta_P^h$}
\psfrag{rh}{\tiny $\rho_{\infty}^h$}
\psfrag{ph}{\tiny $\pi_{\infty}^h$}
\psfrag{bhI}{\tiny $\beta_I^h$}
\psfrag{agI}{\tiny $\alpha_I^g$}
\psfrag{pg}{\tiny $\pi_0^g$}
\psfrag{rg}{\tiny $\rho_0^g$}
\includegraphics[scale=.6]{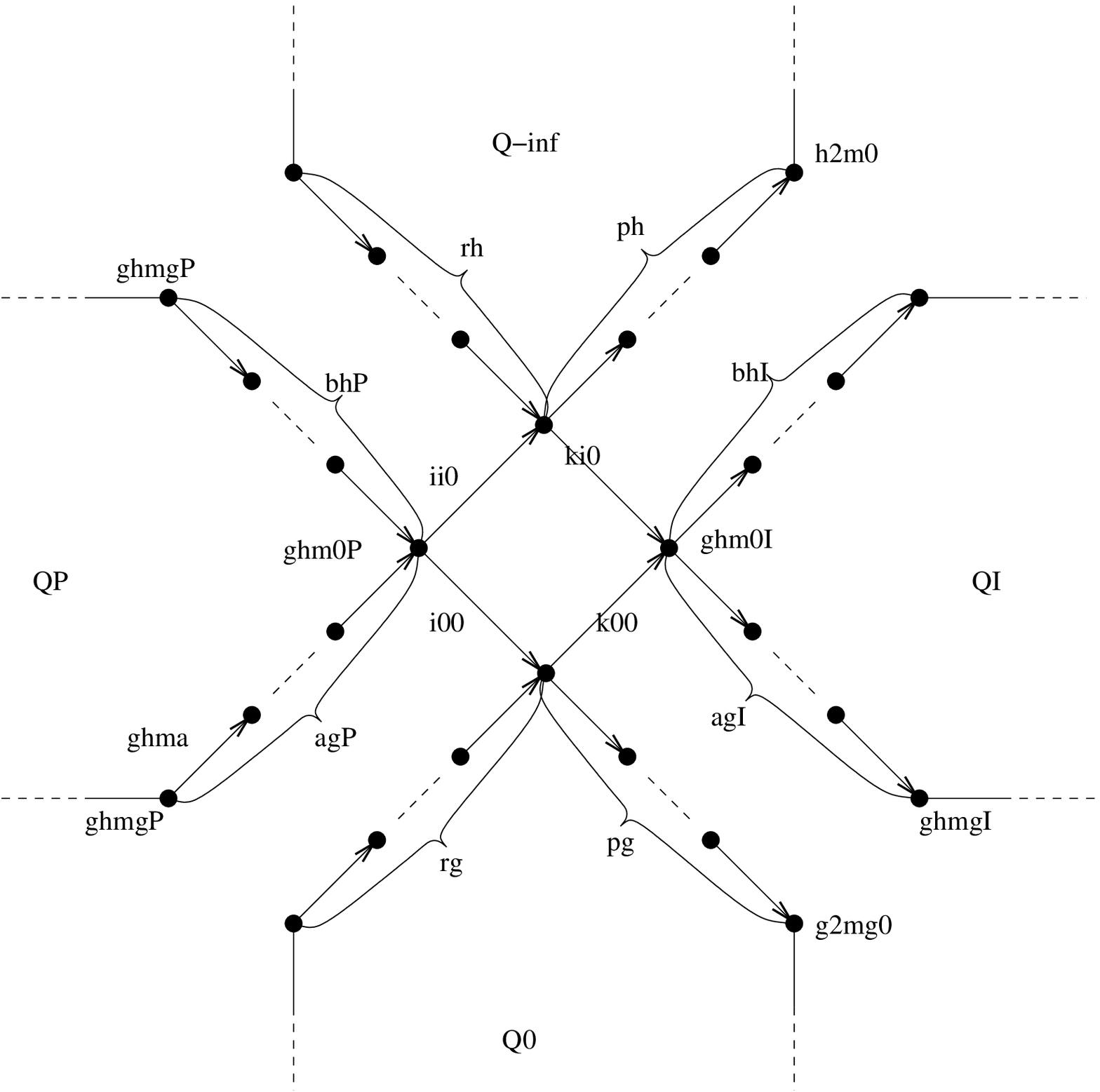}
\end{center}

\begin{itemize}
\item 
$\beta_P^g$ denotes the path of length $g$ from $(ghm,g)_P$ to $(ghm,0)_P$ composed by the arrows $(ghm,\beta_X)_P$. 
\item 
$\alpha_P^h$ denotes the path of length $h$ from $(ghm,g)_P$ to $(ghm,0)_P$ composed by the arrows $(ghm,\alpha_X)_P$. 
\end{itemize}
The definitions of $\alpha_I^g$, $\beta_I^g$, $\pi_0^g$, $\rho_0^g$, $\pi_{\infty}^h$ and $\rho_{\infty}^h$ can be understood from 
the picture above. 

The ``lowest'' vertices of the tubes $Q_{2m(n+1)}^{\sigma}$, $\sigma\in k\cup\{\infty\}$, i.e. the vertices \\
$(0,s)_0\in Q_{2m(n+1)}^0$, $s=1,\dots,g$ \\
$(0,s')_{\infty}$, $s'=0,\dots, h-1$\footnote{The $s'$ were equal to $1,\dots,h$ on page 12.}\\
$(\lambda,1)\in Q_{2m(n+1)}^{\lambda}$ 
are called the vertices at the mouth of the tubes. 
Finally, we write \\
$\varepsilon_{\lambda}=\rho_{\lambda}(2m(n+1)+1)\pi_{\lambda}(2m(n+1)+1)$ for $\lambda\in k\setminus\{0\}$.

\begin{thm}[Hauptsatz]\label{thm:hauptsatz}
Let $\overline{kQ_m}$ be the $k$-category defined by $Q_m$ and the following relations 
\begin{itemize}
\item[(a)] 
$\gamma'\gamma=\delta'\delta$ for all arrows $\gamma$, $\gamma'$, $\delta$, $\delta'$ of $Q_m$  
in a diamond from $X$ to $Z$ with $X\ne (ghm,0)_P$. 

\vskip 5pt

\item[(b)] 
$\pi_{\sigma}\rho_{\sigma}=0 $ for all $\sigma\in k\cup \{\infty\}$ and all arrows  
$\pi_{\sigma}$, $\rho_{\sigma}$ of $Q_{2m(n+1)}^{\sigma}$ 
of the form 
$X^*\stackrel{\rho_{\sigma}}{\longrightarrow} Y\stackrel{\pi_{\sigma}}{\longrightarrow} Z^*$ 
with base points $X^*$ and $Z^*$. 

\vskip 5pt

\item[(c1)]
$\iota_0(g) = \pi_0^g\iota_0(0)\beta_P^h$ \hskip 60pt 
$\iota_{\infty}(g) = \pi_{\infty}^h\iota_{\infty}(0)\alpha_P^g$. 

\vskip 5pt

\item[(c2)] 
$\kappa_0(g) = \rho_0^g\kappa_0(0)\beta_I^h$ \hskip 57pt 
$\kappa_{\infty}(g) = \alpha_I^g \kappa_{\infty}(0)\rho_{\infty}^h$. 

\vskip 5pt

\item[(d)]
$\iota_{\lambda}\alpha_P^g = \lambda\iota_{\lambda}\beta_P^h 
 + \varepsilon_{\lambda}\iota_{\lambda}\beta_P^h$ \hskip 37pt
$\alpha_I^g\kappa_{\lambda} = \lambda\beta_I^h\kappa_{\lambda} 
 + \beta_I^h\kappa_{\lambda}\varepsilon_{\lambda}$. 

\vskip 5pt

\item[(e)]
$\kappa_0(0)\iota_0(0)(ghm,\alpha_n)_P=0$ \hskip 22pt $(ghm,\alpha_n)_I\kappa_0(0)\iota_0(0)=0$.  

\vskip 5pt

\item[(f)]
$\kappa_{\infty}(0)(\rho_{\infty}^h\pi_{\infty}^h)^j\iota_{\infty}(0) = 
\kappa_0(0)(\rho_0^g\pi_0^g)^{2m(n+1)+1-j}\iota_0(0)$. 

\vskip 5pt

\item[(g)] 
$\kappa_{\lambda}\varepsilon_{\lambda}^j\iota_{\lambda} = 
\sum_{i=0}^{j}{2m(n+1)+1-j+i \choose 2m(n+1)+1-j}\lambda^i
\kappa_0(0)(\rho_0^g\pi_0^g)^{j-i}\iota_0(0)$
\end{itemize}

\vskip 5pt

\noindent
with $\lambda\in k\setminus \{0\}$ and $j=0,\,\dots, \, 2m(n+1)+1$. \\
Then $\Phi_m$ induces an isomorphism 
$$
\overline{\Phi_m}:\overline{kQ_m}\to \mathcal{J}_mK
$$
\end{thm}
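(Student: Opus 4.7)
My approach would be to first check that all the relations (a)--(g) hold under $\Phi_m$, so that $\overline{\Phi}_m$ is well-defined on $\overline{kQ_m}$, and then to establish that it is bijective on every Hom space through a case analysis based on where the source and target vertices lie in the decomposition
$$
Q_m = Q_m^P \sqcup Q_m^I \sqcup Q_{2m(n+1)}^0 \sqcup Q_{2m(n+1)}^\infty \sqcup \bigsqcup_{\lambda \in k\setminus\{0\}} Q_{2m(n+1)}^\lambda,
$$
joined by the connecting arrows $\iota_\sigma, \kappa_\sigma$.

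For the first step, the diamond relations in (a) restricted to a single subquiver, together with the mouth relations (b), are inherited directly from the Corollary of Section~\ref{sec:postpr} and the propositions of Sections~\ref{sec:pre-inj} and 4. The remaining diamond relations (those involving connecting arrows) and the relations (c1), (c2), (e), (f) are verified by direct computation on the basis vectors $e_t$, using the explicit form of $\Phi_m\iota_\sigma$ and $\Phi_m\kappa_\sigma$. For relation (d), the matrix identity $\lambda D_d(\lambda) + D_d(\lambda)J_d = D_d(\lambda)(\lambda\,\id_d + J_d)$ matches the definition of $V_d^\lambda(\beta_{g-1})$ and yields the required equality once one observes that $\Phi_m\varepsilon_\lambda$ acts as multiplication by $J_{2m(n+1)+2}$ on the $\beta_{g-1}$ component. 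Relation (g) is verified in the same spirit by expanding $D_d(\lambda)$ power by power and applying Pascal's identity $\binom{n}{k}=\binom{n-1}{k-1}+\binom{n-1}{k}$ to reorganize the coefficients.

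For the bijectivity of $\overline{\Phi}_m$ on Hom spaces, if $X$ and $Y$ lie in the same subcategory, then the restriction of $\overline{\Phi}_m$ is already known to be an isomorphism by the cited propositions; the issue to handle is that $\overline{kQ_m}$ also contains paths from $X$ to $Y$ that leave the subquiver through some $\iota_\sigma$ and return through some $\kappa_\tau$. Relations (e), (f), (g) are designed precisely to rewrite any such ``excursion'' as a linear combination of morphisms that remain inside the original subquiver, yielding a normal form. If $X$ and $Y$ lie in different subquivers, I would exhibit a spanning set of paths of the form (path in source subquiver)$\cdot$(connecting arrow)$\cdot$(path through a tube)$\cdot$(connecting arrow)$\cdot$(path in target subquiver), reduce them via (c1), (c2) and the diamond relations to a small set of canonical paths, and compare with the dimension of $\Hom_{\mathcal{J}_mK}(\Phi_m X, \Phi_m Y)$ computed directly from the explicit descriptions of $V_{(p,q)}$ and $V_d^\lambda$ in Section~1.

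The main obstacle will be the cross-component case where both $\Phi_m X$ and $\Phi_m Y$ lie in post-projective/pre-injective parts but admit factorizations through any of the infinitely many tubes; for fixed $m$ only finitely many of the $R_{2m(n+1)}^\lambda$ need be kept, but still relations (f) and (g) must identify paths through different tubes with paths through the single reference tube $R_{2m(n+1)}^0$. Verifying that these identifications are exactly right---so that the resulting normal form is both spanning and linearly independent---is the technically delicate point, and the binomial-coefficient bookkeeping in (g) matched against the action of $\lambda\,\id + J_d$ is what makes the argument work. Once these normal forms are in hand, faithfulness reduces to checking that the images of the normal-form paths in $\mathcal{J}_mK$ form a basis, which amounts to reading off dimensions from the explicit bases $e_t$ and from the canonical bases of $k^d$ used for $V_d^\lambda$.
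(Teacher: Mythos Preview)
The paper does not actually contain a proof of the Hauptsatz. Immediately after the statement the text reads ``The diploma thesis ends here,'' and the subsequent \emph{Remarks} section (added by the translator) only explains informally what each group of relations (a)--(g) expresses; it does not carry out any verification that $\Phi_m$ annihilates them, nor any argument for fullness or faithfulness. So there is no proof in the source against which your proposal can be compared.

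That said, your outline is the natural strategy and is consistent with how the paper is organised: the propositions in Sections~\ref{sec:postpr}--4 establish exactly the ``within one component'' isomorphisms you invoke, and the explicit formulae for $\Phi_m\iota_\sigma$, $\Phi_m\kappa_\sigma$ and the matrices $D_d(\lambda)$ are set up precisely so that the checks you describe for (c), (d), (e), (f), (g) can be done by hand. Your identification of the delicate point --- that relations (f) and (g) must collapse all paths $P\to$tube$\to I$ through the various tubes onto a single normal form indexed by powers of $\rho_0^g\pi_0^g$, and that the binomial coefficients in (g) are what make the bookkeeping come out --- matches what the translator's remarks hint at but never prove. In short: your plan is sound and is almost certainly what the original thesis intended, but the document you were given simply omits the argument, so there is nothing further to compare.
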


\section*{Remarks} 
The diploma thesis ends here. This section contains a few remarks concerning the relations 
appearing in Theorem~\ref{thm:hauptsatz}. 

\begin{rem}
In (a), we have all the mesh relations with two middle vertices. This includes meshes between 
different components. The latter are indicated by dashed red lines Figure~\ref{fig:quiver-again}. 

(b) shows the relations at the mouths of the tubes. 
So these are the mesh relations involving only one middle vertex. 

In (c1) and (c2), we have relations for arrows between $Q_m^P$ and $Q_{2m(n+1)}^{\infty}$ 
or $Q_{2m(n+1)}^{\infty}$ (respectively), as well as between any of these two tubes and $Q_m^I$. 
The paths/arrows start at the vertex $(ghm,g)_P$ and end at $(g2m(n+1),g)_0$ resp. at 
$(g2m(n+1),g)_{\infty}$ (the two relations at the left) or they start at $(g2m(n+1),g)_0$ (resp. 
at $(g2m(n+1),g)_{\infty}$) and end at $(ghm,g)_I$ (the two relations at the right). 

The relations in (d) link the homogeneous tubes with the post-projective and with the 
pre-projective component. \\
The relation to the left says that when you first compose 
all the $(ghm,\alpha_i)_P$ (starting at $(ghm,g)_P$ ($g$ of them) 
and then compose this with the arrow $\iota_{\lambda}$ 
to get to $Q_{2m(n+1)}^{\lambda}$, it is the same as the sum of two paths, both starting
all the $(ghm,\beta_j)_P$ first ($h$ of them) and the arrow $\iota_{\lambda}$, one is the multiple 
of this by $\lambda$, the other the uses $\varepsilon_{\lambda}$, a path that just goes 
one arrow in this homogeneous tube (from vertex $(\lambda, 2m(n+1)+2$ and back). \\
The relation to the right is dual to it: using $\kappa_{\lambda}$ to go from the homogeneous 
tube to $R_m^I$ and then the composition of all the $(ghm,\alpha_i)_I$ ($g$ of them) is the same 
as the sum of two paths, one starting with $\varepsilon_{\lambda}$, then out of the tube and 
the composition of all the $(ghm,\beta_j)_I$ ($h$ of them), the other the multiple of $\lambda$ of the 
path that just goes out of the tube and then does the composition 
of all the $(ghm,\beta_j)_I$ ($h$ of them).

(e) 
In the diploma thesis, relation (e) uses $\beta_n$ twice, they are replaced by $\alpha_n$. 
These two relations link 
paths from the post-projective component $Q_m^P$ through $Q_{2m(n+1)}^0$ to the pre-injective 
component. 
In the left equation, the last of the $\alpha$'s, s used, then the connecting 
arrows to $Q_{2m(n+1)}^0$ and from there to $Q_m^I$. This composition is zero. 
Dually, the composition of these two connecting arrows with the first of the $\alpha$'s, i.e. 
with $(ghm,\alpha_n)_I$ is zero.

\noindent
Note that the two paths 
$$
\kappa_{\infty}(0)\iota_{\infty}(0)(ghm,\beta_0)_P,\hskip 5pt (ghm,\beta_0)_I\kappa_{\infty}(0)\iota_{\infty}(0)
$$
are also zero. This follows from (e) and (f) (with $j=1$), using the diamond relations of (a) iteratedly to 
push the path all the way down to include a triangle at the mouth of the tube (and then use (b)). 

\noindent
We also note that the four ``other'' compositions 
$$
\begin{array}{ll}
\kappa_0(0)\iota_0(0)(ghm,\beta_0)_P &  (ghm,\beta_0)_I\kappa_0(0)\iota_0(0), \\ 
\kappa_{\infty}(0)\iota_{\infty}(0)(ghm,\alpha_n)_P,&  (ghm,\alpha_n)_I\kappa_{\infty}(0)\iota_{\infty}(0)
\end{array}
$$
are not zero! 

(f) 
Relates paths from $(ghm,0)_P$ to $(ghm,0)_I$, passing around the `top layer' of 
$Q_{2m(n+1)}^{\infty}$ resp. around $Q_{2m(n+1)}^0$ several times (adding up to $2m(n+1)+1$). 

(g) 
Relates paths from $(ghm,0)_P$ passing through a homogeneous tube, going into this 
homogeneous tube (with $\varepsilon$ used $j$ times) and on to $(ghm,0)_I$ with 
a sum of paths passing through $Q_m^0$, going around this tube (at the `top layer') several times, 
and multiplying by a scalar. 
\end{rem}

\newpage
\begin{figure}[ht]
\begin{center}
\psfragscanon
\psfrag{QI}{$Q_m^I$}
\psfrag{QP}{$Q_m^P$}
\psfrag{Q0}{$Q_{2m(n+1)}^{0}$}
\psfrag{QL}{$Q_{2m(n+1)}^{\lambda}$}
\psfrag{Q-inf}{$Q_{2m(n+1)}^{\infty}$}
\psfrag{i00}{\tiny $_{\iota_0(0)}$}
\psfrag{i01}{\tiny $_{\iota_0(1)}$}
\psfrag{i02}{\tiny $_{\iota_0(2)}$}
\psfrag{i03}{\tiny $_{\iota_0(3)}$}
\psfrag{k00}{\tiny $_{\kappa_0(0)}$}
\psfrag{k01}{\tiny $_{\kappa_0(1)}$}
\psfrag{k02}{\tiny $_{\kappa_0(2)}$}
\psfrag{k03}{\tiny $_{\kappa_0(3)}$}
\psfrag{ki0}{\tiny $_{\kappa_{\infty}(0)}$}
\psfrag{ki3}{\tiny $_{\kappa_{\infty}(3)}$}
\psfrag{ki4}{\tiny $_{\kappa_{\infty}(4)}$}
\psfrag{ii0}{\tiny $_{\iota_{\infty}(0)}$}
\psfrag{ii3}{\tiny $_{\iota_{\infty}(3)}$}
\psfrag{ii4}{\tiny $_{\iota_{\infty}(4)}$}
\psfrag{00}{\tiny $_{(0,0)}$}
\psfrag{01}{\tiny $_{(0,1)}$}
\psfrag{02}{\tiny $_{(0,2)}$}
\psfrag{03}{\tiny $_{(0,3)}$}
\psfrag{04}{\tiny $_{(0,4)}$}
\psfrag{ri}{\tiny $_{\rho_{\infty}}$}
\psfrag{pi}{\tiny $_{\pi_{\infty}}$}
\psfrag{r0}{\tiny $_{\rho_{0}}$}
\psfrag{p0}{\tiny $_{\pi_{0}}$}
\psfrag{il}{\tiny $_{\iota_{\lambda}}$}
\psfrag{kl}{\tiny $_{\kappa_{\lambda}}$}
\psfrag{pl1}{\tiny $_{\pi_{\lambda}(1)}$}
\psfrag{rl1}{\tiny $_{\rho_{\lambda}(1)}$}
\psfrag{20m0}{\tiny $_{(20m,0)}$}
\psfrag{6m3}{\tiny $_{(6m,3)}$}
\psfrag{6m0}{\tiny $_{(6m,0)}$}
\psfrag{30m30}{\tiny $_{(30m,3)_0}$}
\psfrag{30m3I}{\tiny $_{(30m,3)_I}$}
\psfrag{6m3I}{\tiny $_{(6m,3)_I}$}
\includegraphics[scale=.65]{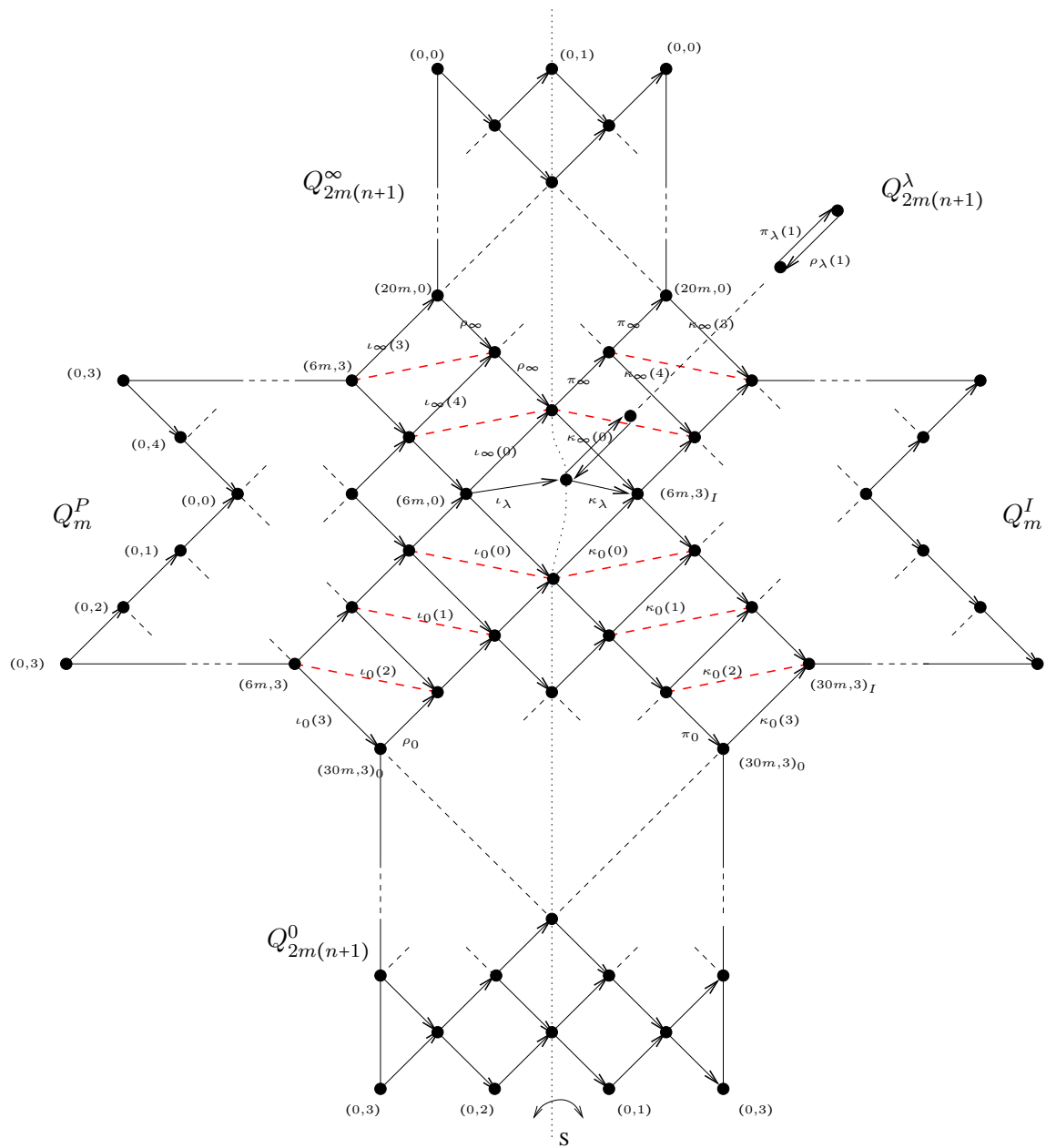}
\caption{$Q_m$ for $g=3$, $n=4$}
\label{fig:quiver-again}
\end{center}
\end{figure}

\newpage

\end{document}